\newtheorem{thm}{Theorem}[section]
\newtheorem*{thm*}{Theorem}
\newtheorem{dfn}[thm]{Definition} 
\newtheorem*{dfn*}{Definition}
\newtheorem{cor}[thm]{Corollary}
\newtheorem*{cor*}{Corollary}
\newtheorem{prop}[thm]{Proposition} 
\newtheorem*{prop*}{Proposition} 
\newtheorem*{properties*}{Properties} 
\newtheorem{lem}[thm]{Lemma} 
\newtheorem*{lem*}{Lemma}
\newtheorem*{claim*}{Claim} 
\newtheorem*{fact*}{Fact}
\newtheorem*{qst*}{Question}
\newtheorem*{pb*}{Problem}
\theoremstyle{remark}
\newtheorem*{algo*}{Algorithm} 
\newtheorem*{rem*}{Remark}
\newtheorem{rem}[thm]{Remark}
\newtheorem*{example*}{Example}
\newtheorem{example}[thm]{Example}
\newenvironment{preuve}[1][Preuve]{\begin{proof}[#1]}{\end{proof}}
\newcounter{numEnonceTmpInterne}% ce compteur sert a avoir un nom d'environnement unique dans enonce*
\newenvironment{enonce*}[1]{\theoremstyle{plain}\stepcounter{numEnonceTmpInterne}%
\def\a{enoncetmp\alph{numEnonceTmpInterne}}%
\newtheorem*{\a}{#1}\begin{\a}}{\end{\a}}
\edef\@tempa#1#2{\def#1{\mathaccent\string"\noexpand\accentclass@#2 }}
\@tempa\rond{017}
\newcommand{\es}{\emptyset}
\renewcommand{\phi}{\varphi} 
\newcommand{\m} {{}^{-1}} 
\newcommand{\eps} {\varepsilon}
\newcommand {\ra} {\rightarrow}
\newcommand{\ie} {i.e.\ }
\newcommand {\calc} {{\mathcal {C}}}
\newcommand {\calg} {{\mathcal {G}}}
\newcommand{\Z}{{\mathbb {Z}}}
\newcommand{\inc}{\subset}
\begin{document} 

\title{Elementary equivalence vs commensurability for hyperbolic groups}
\author{Vincent Guirardel, Gilbert Levitt, Rizos Sklinos}
\date{}
%\date{\today.\\ \small Fichier \texttt{\jobname.tex}
%}

\maketitle

\begin{abstract}
We study to what extent  torsion-free (Gromov)-hyperbolic groups are elementarily equivalent to their finite index subgroups. In particular, we prove that a hyperbolic limit group 
 either  is a free product of cyclic groups and surface groups, 
or   admits infinitely many subgroups of finite index which are pairwise non elementarily equivalent. 
\end{abstract}

\section{Introduction}

There are many ways in which one may try to classify finitely generated (or finitely presented) groups. 
The most obvious one is up to isomorphism, or to commensurability (having isomorphic subgroups of finite index). 
  More generally, Gromov suggested the looser notions of quasi-isometry and measure equivalence 
(commensurable groups are quasi-isometric and measure equivalent, though the converse is far from true).

 First-order logic provides us with a coarser relation than the relation of isomorphism, where one tries to classify groups up to elementary equivalence. Two 
groups are elementarily equivalent if and only if they satisfy the same first-order sentences. 
 As usual,
one must restrict 
the class of groups under consideration. A representative example is the work of Szmielew \cite{Szmielew} that characterised all abelian groups  (possibly infinitely generated) up to elementary equivalence.   

Our concern here is to investigate the relation between elementary equivalence and commensurability for the special class of torsion-free hyperbolic groups, 
a class of groups almost orthogonal to the class of abelian groups  (hyperbolic will always mean Gromov-hyperbolic, also called word-hyperbolic or $\delta$-hyperbolic). 

The solution of the Tarski problem by Kharlampovich-Myasnikov \cite{KhMy_elementary} and Sela \cite{Sela_diophantine6}  
says that  
 all non-abelian free groups are elementarily equivalent. 
  Even more, both works characterised the finitely generated groups that are elementarily free   (\ie elementarily equivalent to a 
 non-abelian free group). 
Specifically, there exist finitely generated groups which are elementarily free but not free. The simplest examples are fundamental groups of   closed surfaces with Euler characteristic at most $-2$, 
and  more generally  non-abelian free products of such surface groups and cyclic groups.   This  particular family is closed under taking finite index subgroups;
in particular, these groups  
 are  elementarily equivalent to their finite index subgroups.

  On the other hand, if $G,G'$ are two torsion-free hyperbolic groups with no non-trivial cyclic splitting 
(fundamental groups of closed hyperbolic $3$-manifolds, for instance), the situation is rigid:
if $G$ and $G'$ are elementarily equivalent   
  (or just have the same universal theory), then they are isomorphic  (see Proposition 7.1 of \cite{Sela_diophantine7}).
In particular,  being co-Hopfian \cite{Sela_structure}, the group $G$ is not elementarily equivalent to any of its proper finite index subgroups.

 All finitely generated elementarily free groups are torsion-free and hyperbolic (see    \cite[page 713] {Sela_diophantine6}  and \cite{Sela_diophantine1}). 
  It is therefore natural to ask whether all finitely generated elementarily free groups are elementarily equivalent to
their finite index subgroups, and   more generally which torsion-free   
hyperbolic groups 
are elementarily equivalent to their finite index subgroups.

We shall see that the answer to the   first question is very much negative. Since it  is conceivable that there exist hyperbolic groups with no proper finite index subgroups, 
we focus on classes of hyperbolic groups which have many finite index subgroups: limit groups  and cubulable groups. 

Limit groups were first introduced as finitely generated $\omega$-residually free groups. They may be viewed as limits of free groups in the space of marked groups \cite{CG_compactifying}. 
In model theoretic terms, they may be characterized as finitely generated groups having the same universal theory as free groups
 \cite{Remeslennikov_exist_siberian}. 
Some of them, for instance non-abelian free products of cyclic groups and surface groups as above, are elementarily free,
but most of them are not.  
A group is cubulable  (or cubulated) if it acts properly discontinuously and cocompactly on a CAT(0) cubical complex.
Cubulable hyperbolic groups played a prominent role in the proof by Agol and Wise of the virtually Haken conjecture about 
3-dimensional manifolds \cite{Agol_virtual}. 
 
Our main results are the following:

\begin{thm} \label{mainintro}
Let $G$ be a  torsion-free hyperbolic group. If $G$ is a limit group or is cubulable, then either $G$ is a free product of surface groups and cyclic groups,   or it 
has infinitely many normal subgroups of finite index which are all different up to elementary equivalence.
\end{thm}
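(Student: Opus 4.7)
The approach is to exploit the canonical cyclic JSJ decomposition of $G$ and track its behavior under passage to normal finite index subgroups. Let $\Lambda$ denote the canonical cyclic JSJ of $G$ (which exists and is well-behaved since $G$ is torsion-free hyperbolic). The vertex groups of $\Lambda$ are of three types: cyclic, surface-type (QH), or rigid. The first step is to establish the dichotomy: if every rigid vertex group of $\Lambda$ is cyclic, a short combinatorial argument on the underlying graph of groups shows that $G$ is a free product of surface groups and cyclic groups, putting us in the first case. So I may assume $\Lambda$ has a non-cyclic rigid vertex group $R$, which is canonical up to conjugacy and is our main source of extra first-order information.

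The second step is to build the sequence of normal subgroups. Using residual finiteness of $G$ (limit groups are linear; cubulable hyperbolic groups are virtually special by Agol--Wise, hence linear), one constructs a descending sequence $G_n \trianglelefteq G$ of normal finite index subgroups with $[G:G_n] \to \infty$. Each $G_n$ acts on the Bass--Serre tree dual to $\Lambda$, inducing a canonical graph of groups decomposition whose rigid vertex groups $R_{n,i}$ are finite index subgroups of conjugates of $R$, and whose surface vertex groups are finite covers of the original ones. I would then fine-tune the choice of the $G_n$ to force the invariants mentioned below to actually vary.

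The heart of the argument is to exhibit a first-order invariant of $G_n$ that distinguishes infinitely many terms of the sequence. There are two natural sources. The first is the innermost rigid piece: by iterating the JSJ within $R$ itself, one reaches a rigid hyperbolic vertex group $R^\ast$ with no non-trivial cyclic splitting, at which point Proposition 7.1 of \cite{Sela_diophantine7} asserts that its isomorphism type is detected already by its universal theory. Being co-Hopfian by \cite{Sela_structure}, $R^\ast$ is not isomorphic to its proper finite index subgroups, so the corresponding innermost rigid pieces of the $G_n$ give infinitely many pairwise non-elementarily-equivalent groups. The second source is a global numerical invariant (e.g.\ counts of conjugacy classes of maximal cyclic subgroups that are elliptic in every cyclic splitting, or the number of orbits of QH vertices of a given genus in $\Lambda_n$), which varies with the index and can be phrased in first-order terms. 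To transfer the distinction from the rigid piece to the ambient $G_n$, one needs first-order interpretability of the rigid pieces inside $G_n$: this uses that rigid vertex groups are, up to conjugacy, characterized as maximal subgroups that are elliptic in every one-edge cyclic splitting of $G_n$, a condition that is essentially first-order (possibly after adjoining finitely many parameters naming representatives of the finitely many orbits of vertices).

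The main obstacle is exactly this last transfer: bridging the gap between internal structural differences (non-isomorphism of inner rigid pieces, or differing counts of JSJ features) and genuine first-order differences of the ambient groups $G_n$. Two technical points are particularly delicate. First, one must verify uniform first-order definability of the JSJ pieces, which requires knowing that being \emph{elliptic in every cyclic splitting} is first-order expressible for the relevant family of splittings; this is plausible because the set of cyclic splittings of a hyperbolic group is controlled by finitely many Dehn twist parameters, but it needs to be made explicit. Second, when the innermost rigid piece $R^\ast$ happens to be a free group or a surface group (hence Tarski-equivalent to its finite index subgroups), route one collapses and one must fall back on route two; the hypothesis that $G$ is a limit group or cubulable should enter here to ensure that a suitable discriminating numerical invariant is available, via the hierarchical/tower structure in the limit case and via the combinatorics of the hyperplane arrangement in the cubulable case.
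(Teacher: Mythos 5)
Your Step 1 dichotomy is already incorrect. Having no non-cyclic rigid vertex group in the cyclic JSJ decomposition does \emph{not} imply that $G$ is a free product of cyclic and surface groups. Socket groups (Section 6 of the paper) are explicit counterexamples: they are one-ended torsion-free hyperbolic, their JSJ has a single surface-type vertex and some cyclic vertices and no rigid non-cyclic vertex, yet they are certainly not free products of cyclic and surface groups (being one-ended and not a surface group). The correct dichotomy is at the level of the Grushko decomposition $G=A_1*\cdots*A_p*F*S_1*\cdots*S_q$: either all non-cyclic factors are closed surface groups ($p=0$), or there is a one-ended Grushko factor $A_i$ that is not a surface group. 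That $A_i$ may very well have no non-cyclic rigid JSJ vertex, so your Route~1 (find an innermost rigid $R^\ast$ with no cyclic splitting and invoke Sela's rigidity) is simply unavailable in general, and in fact fails precisely in the cases that the paper needs to handle.

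The second problem is the one you identify yourself: there is no way in your framework to pass from a structural difference inside the JSJ (non-isomorphic rigid pieces, differing genera or girth) to a first-order difference of the ambient groups $G_n$, and first-order interpretability of JSJ pieces is not established (and, as you sketch it, would be quite delicate because being ``elliptic in every cyclic splitting'' quantifies over splittings). The paper sidesteps this completely with a different mechanism. It introduces the notion of a preretraction (Definitions \ref{prere} and \ref{weak}) and proves, via Sela's shortening argument encoded in a single first-order sentence (Proposition \ref{plgr}), that if a one-ended Grushko factor $A$ of $G$ has no non-injective preretraction associated to its cyclic JSJ and $G\equiv G'$, then $A$ embeds into $G'$; combined with co-Hopfianity of one-ended torsion-free hyperbolic groups this forbids infinitely many pairwise elementarily equivalent $G_n$'s with properly nested prototype factors. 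The hypotheses ``limit group or cubulable'' enter through Lemma \ref{abquot}: both classes have enough abelian virtual quotients, which via Lemma \ref{bongpe} provides finite index characteristic subgroups whose JSJ has orientable surfaces, edge groups injecting into the abelianization, and arbitrarily large girth, and Lemma \ref{step2} (using the pinching analysis of Lemma \ref{lemcle}) shows such subgroups have no non-injective weak preretraction. Your proposal contains none of these ideas, and the gap you flag as ``the main obstacle'' is precisely the point at which the argument you outline cannot be completed.
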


\begin{cor}
A finitely generated elementarily free group is elementarily equivalent to all its finite index subgroups if and only if it is a free product of surface groups and cyclic groups.
\end{cor}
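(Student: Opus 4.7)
The plan is to prove the two implications of the corollary separately, each by a direct application of Theorem \ref{mainintro} combined with facts already recalled in the introduction.

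For the ``if'' direction, I start from the observation, noted in the introduction, that the class of non-abelian free products of surface groups and cyclic groups is closed under taking subgroups of finite index and consists entirely of elementarily free groups. Thus if $G$ is such a group and $H \le G$ has finite index, then $H$ lies in the same class, so both $G$ and $H$ are elementarily equivalent to a non-abelian free group; transitivity of $\equiv$ yields $G \equiv H$.

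For the ``only if'' direction, assume $G$ is finitely generated, elementarily free, and elementarily equivalent to each of its finite index subgroups. The introduction recalls that finitely generated elementarily free groups are torsion-free and hyperbolic. Moreover, sharing the full first-order theory of a non-abelian free group entails sharing its universal theory, and by Remeslennikov's characterization a finitely generated group with the universal theory of a free group is a limit group. Hence $G$ is a torsion-free hyperbolic limit group and Theorem \ref{mainintro} applies. Its second alternative would produce infinitely many normal finite index subgroups of $G$ that are pairwise non elementarily equivalent; but by hypothesis each of them is elementarily equivalent to $G$, so they would all be elementarily equivalent to each other, a contradiction. Only the first alternative survives, and $G$ is a free product of surface groups and cyclic groups.

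The genuine mathematical content is of course entirely absorbed into Theorem \ref{mainintro}; the corollary itself is a short logical manipulation. The only points one might flag are the two reductions used in the forward direction, namely that an elementarily free group is torsion-free hyperbolic and is a limit group, both of which are standard and explicitly recalled in the introduction. I do not see any additional obstacle once \ref{mainintro} is available.
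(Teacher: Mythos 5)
Your proof is correct and matches the paper's own argument: the ``only if'' direction is exactly the application of Theorem \ref{mainintro} after observing that a finitely generated elementarily free group is a torsion-free hyperbolic limit group, and the ``if'' direction uses closure of the relevant class under finite index subgroups. One small imprecision: the class of non-abelian free products of surface groups and cyclic groups does \emph{not} consist entirely of elementarily free groups (the non-orientable genus-$3$ surface group is a counterexample, as it is not even a limit group); the introduction's claim is about surfaces with $\chi\le -2$, and in your setting this restriction is automatic because $G$ is elementarily free, hence a limit group, so its surface factors (and those of its finite index subgroups, whose Euler characteristics only decrease under finite covers) must have $\chi\le -2$.
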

We also prove:

\begin{thm} \label{mainintro2}
There exists a  
one-ended    
hyperbolic 
limit  group $G$ which is not elementarily free, but  has an elementarily free finite index subgroup. \end{thm}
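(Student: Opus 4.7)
The plan is to take as $G$ the fundamental group of the closed non-orientable surface $N_4$ of non-orientable genus $4$, i.e.,
\[
G \;=\; \langle a_1, a_2, a_3, a_4 \mid a_1^2 a_2^2 a_3^2 a_4^2 \rangle.
\]
Since $N_4$ is a closed surface of Euler characteristic $-2$, the group $G$ is automatically torsion-free, one-ended, and hyperbolic. It then suffices to establish three further properties: $G$ is a limit group, $G$ has an elementarily free subgroup of finite index, and $G$ itself is not elementarily free.

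For the limit-group status, I would appeal to the classical theorem of Baumslag (with variants due to Magnus and to Lyndon) asserting that $\pi_1(N_k)$ is fully residually free, hence a limit group, precisely when $k \geq 4$. This forces the choice $k=4$ (or larger); smaller values genuinely fail (for example $\pi_1(N_2)$ is the Klein bottle group, which contains $\bbZ^2$ and thus cannot be fully residually free).

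For the elementarily free subgroup, I would use the orientation double cover $\widetilde{N}_4 \to N_4$, which is homeomorphic to the closed orientable surface $\Sigma_3$ of genus $3$ (checked via Euler characteristics: $\chi(\Sigma_3) = -4 = 2\,\chi(N_4)$). The index-$2$ subgroup $H < G$ corresponding to the orientation homomorphism $G \to \bbZ/2\bbZ$ is thus isomorphic to $\pi_1(\Sigma_3)$. Since $\Sigma_3$ is closed orientable of genus $\geq 2$, the Sela/Kharlampovich--Myasnikov solution of the Tarski problem \cite{Sela_diophantine6,KhMy_elementary} tells us that $H$ is elementarily free.

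The main obstacle is the third point: proving that $G$ itself is not elementarily free. My plan is to invoke the structural characterization (also due to Sela and to Kharlampovich--Myasnikov) of finitely generated elementarily free groups as non-cyclic hyperbolic towers, equivalently NTQ groups. Among closed surface groups, this tower structure is supported only by the orientable surfaces of genus $\geq 2$: the non-orientable case is ruled out because the surface pieces that appear in a hyperbolic tower are required to be orientable. Consequently $\pi_1(N_4)$, being the fundamental group of a non-orientable closed surface, cannot be elementarily free, although it becomes elementarily free after passing to its orientation double cover. This yields the desired group $G$.
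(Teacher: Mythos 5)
Your third step is wrong, and the error is fatal: $\pi_1(N_4)$ \emph{is} elementarily free, so it cannot serve as the desired example. Concretely, the map $a_1\mapsto x$, $a_2\mapsto x^{-1}$, $a_3\mapsto y$, $a_4\mapsto y^{-1}$ retracts $\pi_1(N_4)$ onto a non-abelian free subgroup and exhibits it as a hyperbolic tower over a free group, hence elementarily free; this is stated explicitly in the paper's preliminaries (``Fundamental groups of orientable closed surfaces, and of non-orientable surfaces with $g\ge4$, are limit groups (they are even elementarily free)''). Your structural claim that the surface pieces in a hyperbolic tower must be orientable is a misreading: the constraint is that the pieces be \emph{non-exceptional}, i.e.\ not one of the four surfaces of Definition~\ref{exsur} (pair of pants, twice-punctured projective plane, once-punctured Klein bottle, closed non-orientable genus $3$). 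Non-orientable closed surfaces of genus $\geq 4$ are permitted, so your ``orientability'' obstruction does not exist. The same phenomenon shows up in Corollary~\ref{gelfree}: $\pi_1(N_4)$ has a non-injective preretraction associated to its (trivial) JSJ decomposition, namely the retraction above.

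The paper's actual construction is necessarily more delicate. It takes $G$ to be the socket group over an orientable once-punctured surface $\Sigma$ of genus $2$ with a single socket of order $3$. Proposition~\ref{soclim} shows $G$ is a limit group because $g=2\geq (n+1)/2$ with $n=3$; Proposition~\ref{socelfree} shows $G$ is \emph{not} elementarily free because condition~(3) there requires $\Sigma$ to be non-orientable; and the kernel $G_0$ of the map $G\to\Z/3\Z$ killing $a_1,b_1,a_2,b_2$ is an index-$3$ subgroup which is the fundamental group of three copies of the punctured genus-$2$ surface glued along a common boundary circle, and this retracts onto $\pi_1(\Sigma_4)$, so $G_0$ is elementarily free. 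You would need a mechanism of this sort — a group that is \emph{not} a hyperbolic tower but has a finite-index subgroup that is — rather than a closed surface group, since closed surface groups of negative Euler characteristic are elementarily free exactly when they are limit groups (the only closed hyperbolic surface group that is not a limit group being $\pi_1(N_3)$, and that one has no index-$2$ orientation cover of the form you want either, since $\Sigma_2$'s preimage argument would give genus $2$ but $N_3$ itself is not a limit group so cannot embed in a free group).
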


Among torsion-free hyperbolic groups, Sela \cite{Sela_diophantine7} singled out a special family, which he called elementary prototypes, with the property that two elementary prototypes 
are elementarily equivalent if and only if they are isomorphic. 
Our way of proving Theorem \ref{mainintro} is to  construct non-isomorphic finite index subgroups of $G$ which are elementary prototypes. 
In Theorem \ref {mainintro2} we construct $G$ as the fundamental group of a surface with a socket.
\\

Let us now describe the contents of the paper in a more detailed way. 

In order to make our main results as independent as possible of the deep theory needed to solve Tarski's problem and establish the results of 
\cite{Sela_diophantine7}, we view one-ended prototypes  as groups having no non-injective preretraction (in the sense of \cite{Perin_elementary}), 
and in Section \ref{eleq} we give a direct argument to show that for such groups elementary equivalence is the same as isomorphism.   As the name suggests, a non-injective preretraction yields a retraction and thus a hyperbolic floor \cite{Perin_elementary}.

A one-ended torsion-free hyperbolic group has a canonical splitting over cyclic groups, its JSJ splitting \cite{Sela_structure,Bo_cut, GL3}. 
A special role is played by the quadratically hanging (QH) vertex groups: they are isomorphic to the fundamental groups of compact surfaces, and incident edge groups are boundary subgroups. 

Using a key technical lemma 
proved in Section \ref{pinch}, we show (Proposition \ref{rephr}) that  the existence of 
a non-injective preretraction is equivalent to that of a surface $\Sigma$ associated to a QH vertex group of the 
JSJ decomposition, and a map $p:\pi_1(\Sigma)\to G$ with the following properties: $p$ is a conjugation on every boundary subgroup, 
is not an isomorphism of $\pi_1(\Sigma)$ onto a conjugate, and has non-abelian image. We also show that $p$ cannot exist if the genus of $\Sigma$ is too small (Corollary \ref{mino}).

The main new results are contained in the last two sections. We first consider fundamental groups of surfaces with sockets, 
obtained by enlarging the fundamental group of a surface with boundary by adding roots of generators of boundary subgroups. 
We determine which of these groups are limit groups,   which of them are elementarily free, and which of them are elementary prototypes  
(i.e.\ do not have non-injective preretractions).  In particular, we prove Theorem \ref{mainintro2} by showing that the fundamental group of an orientable 
surface of genus 2 with a socket of order 3 is a limit group which is an elementary prototype but has an elementarily free subgroup of index 3.  
We also study a related family of groups, for which all sockets are identified.

In Section \ref{fis} we prove Theorem \ref{mainintro}, using the fact that 
a one-ended   torsion-free hyperbolic group $G$ is an elementary prototype if its   cyclic JSJ decomposition satisfies the following conditions: all surfaces 
are orientable, the girth of the graph of groups is large compared to the complexity of the surfaces which appear, and edge groups map injectively to the abelianization of $G$. 
This condition about edge groups is the main reason why we restrict to limit groups and cubulable groups.

\paragraph{Acknowledgements}
This work was mainly conducted  during the  2016 Oberwolfach workshop on Model Theory,
 for which the authors acknowledge support and hospitality.
The first author acknowledges support from the Institut universitaire de France.
The second author is grateful to the organizers of the 2015 Workshop on Model Theory and Groups in Istanbul for making him think about the topic studied here. 
The third author was supported by the LABEX MILYON (ANR-10-LABX-0070) of Université de Lyon, within the program "Investissements d'Avenir" (ANR-11-IDEX-0007) operated by the French National Research Agency (ANR).

\section{Preliminaries}

\paragraph{Graphs of groups.}
Graphs of groups are a generalization of free products with amalgamation and HNN-extensions. 
Recall that a graph of groups $\calg$ consists of a graph $\Gamma$, the assignment of a group $G_v$ or $G_e$ to each vertex $v$ or non-oriented edge $e$, 
and injections $G_e\to G_v$ whenever $v$ is the origin of an oriented edge (see \cite {Serre_arbres} for precise definitions); the image of $G_e$ is called an incident edge group of $G_v$. 
This data yields a group $G(\calg)$, the fundamental group of $\calg$, with an action of $G(\calg)$ on a simplicial tree $T$ (the Bass-Serre tree of $\calg$).  We sometimes call the graph of groups $\Gamma$ rather than $\calg$. 

A graph of groups decomposition (or splitting) of a group $G$ is an isomorphism of $G$ with the fundamental group of a graph of groups. 
The vertex and edge groups $G_v$ and $G_e$ are then naturally viewed as subgroups of $G$. We always assume that $G$ is finitely generated 
and that the graph of groups is minimal (no proper connected subgraph of $\Gamma$ carries the whole of $G$); it follows that $\Gamma$ is a finite graph. 
We allow the trivial splitting, with $\Gamma$ a single vertex $v$ with $G_v=G$.

  Killing all vertex groups defines an epimorphism from $G$ to the topological fundamental group of the graph $\Gamma$ 
(a free group, possibly cyclic or trivial).

The (closed) star of a vertex $v$ of $\Gamma$ is the union of all edges containing $v$. The open star of $v$ is obtained by removing  all vertices $w\ne v$ from the closed star.

\paragraph{Grushko decompositions.}
A finitely generated group has a Grushko decomposition: it is isomorphic to   a free product $A_1*\ldots*A_n*\mathbb{F}_m$, where each $A_i$ for $i\leq n$ is non-trivial, 
freely indecomposable and not infinite cyclic, and $\mathbb{F}_m$ is a free group of rank $m$. Moreover, the numbers $n$ and $m$ are unique, as well as the $A_i$'s up to conjugacy (in $G$) and permutation. 
 We call the $A_i$'s the Grushko factors of $G$. When $G$ is torsion-free, they are precisely the one-ended free factors of $G$   by a classical theorem of Stallings. 
 
Decompositions as free products correspond to graph of groups decompositions with trivial edge groups. 
A one-ended group does not split over a trivial group, but it may split over $\Z$. We discuss this when $G$ is a torsion-free hyperbolic group. 
 
 \paragraph{Hyperbolic groups and their cyclic JSJ decompositions.}
Recall that  a finitely generated group is hyperbolic if its  Cayley graph (with respect to some, hence any, finite generating set) 
is a hyperbolic metric space: there exists $\delta>0$ such that any point on one side of any geodesic triangle is $\delta$-close to one of the other two sides. 
Small cancellation groups and fundamental groups of negatively curved closed manifolds are hyperbolic. In particular
 free groups, fundamental groups of closed surfaces with negative Euler characteristic, and free products of such groups, are hyperbolic. 
A one-ended torsion-free hyperbolic group $G$ is co-Hopfian \cite{Sela_structure}: it cannot be isomorphic to a proper subgroup.
 
A one-ended torsion-free hyperbolic group $G$ has a canonical JSJ decomposition $\Gamma_{can}$ over cyclic groups \cite{Sela_structure, Bo_cut, GL3}. 
We mention the properties that will be important for this paper. 
The graph $\Gamma_{can}$ is bipartite, with every edge joining a vertex carrying a cyclic group to a vertex carrying a non-cyclic group. 
The action of $G$ on the associated Bass-Serre tree $T$ is invariant under 
automorphisms of $G$ and   acylindrical in the following strong sense: if a non-trivial element $g\in G$ 
fixes a segment of length $\ge2$ in $T$, then this segment has length exactly $2$ and its midpoint has cyclic stabilizer. 
 
\paragraph{Surface-type vertices.}
There are two kinds of vertices of $\Gamma_{can}$ carrying a non-cyclic group: rigid ones, and quadratically hanging (QH) ones. 
We will be concerned mostly with QH vertices. 

If $v$ is a QH vertex of $\Gamma_{can}$, the group $G_v$ is isomorphic to the fundamental group of a compact (possibly non-orientable) surface $\Sigma$. 
Incident edge groups $G_e$ are boundary subgroups of $\pi_1(\Sigma)$: there exists a component $C$ of $\partial \Sigma$ such that the image of $G_e$ is equal to $\pi_1(C)$. 
We do not specify base points, so we should really say that $G_e$ is conjugate to $\pi_1(C)$ in $\pi_1(\Sigma)$;   
similarly, any non null-homotopic simple closed curve in $\Sigma$ determines an infinite cyclic subgroup of $\pi_1(\Sigma)$, well-defined up to conjugacy.

Moreover, given any component $C$ of $\partial \Sigma$, there is a unique incident edge $e$ such that $G_e$ equals $\pi_1(C)$. 
These properties are not true for general QH vertices (see  \cite{GL3}), so as in \cite{Perin_elementary} we recall them by calling QH vertices $v$ of $\Gamma_{can}$ \emph{surface-type vertices} (see Definition \ref{stype}).
 
\paragraph{Surfaces.}
 
The genus $g=g(\Sigma)$ of a compact surface $\Sigma$ is the  largest number of nonintersecting (possibly one-sided) simple closed curves (other than components of $\partial \Sigma$) 
that can be drawn on the surface without disconnecting it. If $\Sigma_1,\dots, \Sigma_k$ are disjoint compact subsurfaces of 
$\Sigma$, one clearly has $\sum_k g(\Sigma_i)\le g(\Sigma)$. Surfaces of genus 0 are obtained by removing open discs from a sphere, and  they are called planar.

If $\Sigma$ has $b$ boundary components, its Euler characteristic is $\chi(\Sigma)=2-2g-b$ if $\Sigma$ is orientable, $\chi(\Sigma)=2-g-b$ otherwise. 
Surfaces appearing in surface-type vertices will always have non-abelian fundamental group; equivalently, $\chi(\Sigma)$ will be negative. 
Two surfaces are homeomorphic if and only if they are both orientable or non-orientable, and $g$ and $\chi$ are the same. 
 
Given a compact  surface $\Sigma$, there is an upper bound for the cardinality of a family of disjoint non-parallel simple closed curves 
which are not null-homotopic (a curve is null-homotopic if it bounds a disc, two curves are parallel if they bound an annulus); for instance, 
the bound is $3g-3$ if $\Sigma$ is a closed orientable surface of genus $g\ge2$. 
 
There are $5$ surfaces with $\chi(\Sigma)=-1$. The orientable ones are the pair of pants ($g=0$, $b=3$) and the once-punctured torus ($g=1$, $b=1$). 
The non-orientable ones are the twice punctured projective plane ($g=1$, $b=2$), the once-punctured Klein bottle ($g=2$, $b=1$), and the closed surface of genus 3. 
With the exception of the punctured torus, these surfaces do not carry pseudo-Anosov diffeomorphisms and are not allowed to  appear in hyperbolic towers (in the sense of   \cite{Sela_diophantine1,Perin_elementary}).
They should be considered as exceptional (see Definition \ref{exsur}).

 \paragraph{Actions of surface groups on trees.}
 
   Let $\Sigma$ be a compact surface. 
We describe  a standard construction associating a finite 
family $\calc$ of disjoint simple closed curves on $\Sigma$ to an action of $\pi_1(\Sigma)$ on a tree $T$ such that boundary subgroups are elliptic (they fix a vertex in $T$).
  
The group $\pi_1(\Sigma)$ acts freely on the universal covering $\tilde \Sigma$. It also acts on $T$, and we construct an equivariant  continuous map $\tilde f:\tilde \Sigma\to T$.
  
First suppose that $\Sigma$ is closed. Fix a triangulation of $\Sigma$ and lift it to the universal covering $\tilde \Sigma$. We define $\tilde f$ on the set of vertices, 
making sure that vertices $\tilde \Sigma$ are mapped to vertices of $T$. We then extend it to edges in  a linear way, and to triangles (2-simplices). 
There is a lot of freedom in this construction, but we make sure that $\tilde f$ is in general position with respect to midpoints of edges: 
the preimage $\tilde \calc$ of the set of midpoints of edges of $T$ intersects each triangle in a finite collection of disjoint arcs joining one side of the triangle to another side. 
The construction is the same if $\Sigma$ has a boundary, but we require that each line in $\partial\tilde\Sigma$ 
be mapped to a single vertex of $T$ (this is possible because boundary subgroups act elliptically   on $T$).
  
The subset $\tilde\calc\inc\tilde\Sigma$ is $\pi_1(\Sigma)$-invariant and its projection to $\Sigma$ is a finite family $\calc$ of disjoint simple closed curves.

\paragraph{Limit groups.}

A group $G$ is residually free if, for every nontrivial element $g\in G$, there exists a morphism $h:G\rightarrow \mathbb{F}$ for some free group $\mathbb{F}$ such that $h(g)$ is nontrivial. 

A group $G$ is $\omega$-residually free if, for every finite set    
$\{g_1,\ldots, g_n\}\subset G\setminus\{1\}$, there exists a morphism $h:G\rightarrow\mathbb{F}$ for some free group $\mathbb{F}$ such that $h(g_i)$ is nontrivial for all $i\leq n$.

Remeslenikov \cite{Remeslennikov_exist_siberian} proved that the class of $\omega$-residually free groups coincides with the class of $\forall$-free groups, i.e.\ the class of groups that have the same universal theory as a free group. 
In his work on the Tarski problem, Sela viewed finitely generated $\omega$-residually free groups in a more geometric way and called them limit groups because they arise from limiting processes
 (see \cite{Sela_diophantine1,CG_compactifying}). Limit groups have the same universal theory as a free group, but they are not necessarily elementarily free.

Free abelian groups and free groups are limit groups. Fundamental groups of orientable closed surfaces, and of non-orientable surfaces with $g\ge4$, are limit groups (they are even elementarily free).  
A finitely generated subgroup of a limit group, and a free product of limit groups, are limit groups. A limit group is hyperbolic if and only if it does not contain $\Z^2$ (\cite{Sela_diophantine1}).

\section{Pinched  curves on surfaces}  \label{pinch}

\begin{dfn} [Surface-type vertex]\label{stype}
Let $\Gamma$ be a graph of groups decomposition of a group $G$.  
A vertex $v$ of $\Gamma$  
is called a \emph{surface-type vertex} if the following conditions hold:
\begin{itemize}
 \item the group $G_v$ carried by $v$ is the fundamental group of a compact   surface $\Sigma$ 	  (usually with boundary), with $\pi_1(\Sigma)$ non-abelian;
  \item incident edge groups are maximal boundary subgroups of $\pi_1(\Sigma)$, and this induces a bijection
 between the set of boundary components of $\Sigma$ and the set of incident edges.
\end{itemize}

We say that $G_v$ is a surface-type vertex group. If $u$ is any lift of $v$ to 
the Bass-Serre tree $T$ of $\Gamma$, we say that  $u$  
is a surface-type vertex, and its stabilizer $G_u$ (which is conjugate to $G_v$) is a surface-type vertex stabilizer.
 \end{dfn}
 
 Surface-type vertices are QH (quadratically hanging) vertices in the sense of \cite{GL3}.

\begin{dfn}[Pinching]\label{pin}
  Let $\Sigma$ be a compact surface. 
Given a homomorphism $p:\pi_1(\Sigma)\to G$, a \emph{family of pinched curves} is a collection $\calc$  of disjoint, non-parallel, two-sided simple closed curves $C_i\inc\Sigma$, none of which is null-homotopic, such that the  fundamental group of each $C_i$  is contained in $\ker p$ (the curves may be parallel to a boundary component).

  The map $p$ is \emph {non-pinching} if there is no pinched curve: $p$ is injective in restriction to the fundamental group of any   simple closed curve which is not null-homotopic. 
\end{dfn}

  Let   $S$  be a component of the surface $\hat \Sigma$ obtained by cutting $\Sigma$ along $\calc$.
Its fundamental group is naturally identified with a subgroup of  $\pi_1(\Sigma)$,  so
  $p$ restricts to a map from $\pi_1(S)$ to $G$, which we also denote by $p$. 
   
\begin{lem} \label{lemcle}

Let $\Gamma$ be a graph of groups decomposition of a group $G$, with Bass-Serre tree $T$.
Assume that $\Gamma$ has a single surface-type vertex $v$   with $G_v=\pi_1(\Sigma)$, together with  vertices $v_1,\dots,v_n$ (with $n\ge 1$), and   every edge of $\Gamma$ joins $v$ to some   $v_i$ (in particular,   edge groups of $\Gamma$ are infinite cyclic).

Let $p:\pi_1(\Sigma)\to G$ be a homomorphism such that the image of every boundary subgroup of $\pi_1(\Sigma)$ 
 fixes an edge of  $T$, 
and $p$ is not an isomorphism onto some subgroup of $G$ conjugate to $\pi_1(\Sigma)$. 

Let  $\calc$ be a maximal family of pinched curves on $\Sigma$, and let $S$ be a component of the surface obtained by cutting $\Sigma$ along $\calc$.

 Then the image of   $\pi_1(S) $ by $p$ is contained in a conjugate of some $G_{v_i}$ (\ie it fixes a   vertex of $T$ which is not a lift of $v$).
\end{lem}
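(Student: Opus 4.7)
The idea is to apply the standard construction recalled just before the lemma to the action of $\pi_1(\Sigma)$ on the Bass--Serre tree $T$ via $p$, and then combine the resulting family of curves with the maximality of $\calc$. Concretely, I produce a finite family $\calc^*$ of disjoint simple closed curves on $\Sigma$ satisfying: (i) $p(\pi_1(C))$ lies in an edge stabilizer of $T$ for each $C \in \calc^*$, and (ii) $p(\pi_1(\Omega))$ lies in a vertex stabilizer of $T$ for each component $\Omega$ of $\Sigma \setminus \calc^*$. Since the curves of $\calc$ are pinched, their fundamental groups act trivially on $T$, so the equivariant map $\tilde f\colon \tilde\Sigma \to T$ appearing in the construction can be chosen so that lifts of the curves of $\calc$ are sent to midpoints of edges, ensuring $\calc \subset \calc^*$. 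After discarding null-homotopic curves and retaining a single representative in each parallelism class (while keeping curves of $\calc$), the family $\calc^*$ becomes pairwise non-parallel and non null-homotopic, still containing $\calc$.

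By (i), every $C \in \calc^*$ has $p(\pi_1(C))$ inside an infinite cyclic edge stabilizer, so either $C$ is pinched or $p|_{\pi_1(C)}$ is injective. If some $C \in \calc^* \setminus \calc$ were pinched, then $\calc \cup \{C\}$ would be a strictly larger family of disjoint, pairwise non-parallel, non null-homotopic pinched curves, contradicting the maximality of $\calc$. Hence every curve in $\calc^* \setminus \calc$ is non-pinched.

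Now fix a component $S$ of $\Sigma \setminus \calc$, and let $\calc^*_S$ denote the curves of $\calc^*$ lying in the interior of $S$; by the previous paragraph these are all non-pinched. The components of $S \setminus \calc^*_S$ are exactly the components of $\Sigma \setminus \calc^*$ contained in $S$, so the fundamental group of each such sub-piece is mapped by $p$ into a vertex stabilizer of $T$. The main task, and the main obstacle, is to strengthen this from ``each sub-piece'' to ``$\pi_1(S)$ itself'', and moreover to show that the fixed vertex is a lift of some $v_i$ rather than of $v$. My plan here is to exploit the bipartite structure of $T$ --- every edge of $\Gamma$ joins $v$ to some $v_i$, so vertex stabilizers come in two types and adjacent ones meet in cyclic edge stabilizers --- together with the hypothesis that boundary subgroups of $\pi_1(\Sigma)$ act on $T$ by fixing edges. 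If $\calc^*_S$ were non-empty, or if $\pi_1(S)$ fixed a vertex of $T$ which was a lift of $v$, one could assemble from $p$ and the graph-of-groups structure an isomorphism of $\pi_1(\Sigma)$ onto a conjugate of $\pi_1(\Sigma)$ in $G$: the non-pinched curves in $\calc^*_S$ would map injectively to boundary subgroups of a conjugate of $\pi_1(\Sigma)$, and reassembling across all components of $\Sigma \setminus \calc$ recovers a conjugation of $\pi_1(\Sigma)$ into itself. This would contradict the hypothesis that $p$ is not an isomorphism onto a conjugate, forcing $\calc^*_S = \emptyset$ and the fixed vertex to be a lift of some $v_i$, which yields the conclusion.
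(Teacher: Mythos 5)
Your setup is sound and broadly parallel to the paper's: both run the standard equivariant map construction, obtain a family of curves cutting the surface into pieces mapped into vertex stabilizers, and both use maximality of $\calc$ to conclude that the extra curves are non-pinched. The paper prefers to work with the capped-off surface $S_0$ (discs glued along $\calc$) rather than arranging $\calc\subset\calc^*$ on $\Sigma$ itself, but that difference is cosmetic. Up to the point where you observe that each sub-piece of $S\setminus\calc^*_S$ maps into a vertex stabilizer, you match the paper.

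The gap is in the final paragraph, which is precisely where the lemma's content lives. You assert that if some $\calc^*_S\neq\emptyset$, or if some sub-piece maps into a lift of $v$, then one can ``assemble from $p$ and the graph-of-groups structure an isomorphism of $\pi_1(\Sigma)$ onto a conjugate,'' and that ``reassembling across all components recovers a conjugation of $\pi_1(\Sigma)$ into itself.'' No argument is given, and the claim as stated is not true without further work: a sub-piece $Z$ mapping into a conjugate of $\pi_1(\Sigma)$ could a priori map with non-injective kernel into the interior, could map with image in a boundary subgroup, or could map as a proper finite-index (degree $d>1$) covering; none of these would directly produce an isomorphism of $\pi_1(\Sigma)$ onto a conjugate, and ruling them out is the entire point. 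The paper handles exactly this via Perin's Lemma~6.2 (a non-pinching map from a surface group into a surface group sending boundary to boundary is either closed-source, has image in a boundary subgroup, or has finite-index image), then disposes of each branch: the closed case is impossible since there are boundary curves, the boundary-subgroup case contradicts minimality of the curve family $\calc_0$, and the finite-index case forces $d=1$ by the Euler characteristic inequality $\chi(\Sigma)\ge\chi(S_0)\ge\chi(Z)=d\,\chi(\Sigma)$ and then Perin's Lemma~3.12 to get a genuine isomorphism (hence the contradiction). Without a substitute for that trichotomy and the Euler-characteristic bound, your ``reassembling'' step is not a proof. You also implicitly use that $\calc^*_S\neq\emptyset$ by itself forces the contradiction, but that is not so; you need to follow the curves to an adjacent piece landing in a lift of $v$ and then run the analysis above on that piece.
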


\begin{proof} 
The boundary of $S$ consists of components of $\partial \Sigma$ and curves coming from $\calc$. Define $S_0$ by  gluing  a disc   to $S$
 along each curve coming from $\calc$.
 There is an induced map $\pi:\pi_1(S_0)\to G$, hence an action of $\pi_1(S_0)$ on the Bass-Serre tree $T$, and we must show that $\pi_1(S_0)$   fixes a lift of some $v_i$. We may assume that   the image of $\pi$ is non-trivial.

  As in the preliminaries, we consider the universal covering $\tilde S_0$, and $\pi_1(S_0)$-equivariant maps $\tilde f:\tilde S_0\to T$ such that every line in $\partial \tilde S_0$ is mapped to a vertex (such maps exist because boundary subgroups of $\pi_1(\Sigma)$ fix a point in $T$). Assuming that $\tilde f$ is in general position, we consider preimages of midpoints of edges and we project to $S_0$. This yields a finite family $\calc_0$ of simple closed curves on $S_0$, and we choose $\tilde f$ so as to minimize  the number of these curves  (in particular, no curve in $\calc_0$ is null-homotopic). 
 
 The map $\tilde f$ induces a map $f:S_0\to \Gamma$ sending each component $Y$ of $S_0\setminus \calc_0$ into the open star of some vertex $v_Y$ of $\Gamma$. We shall show that   
 \emph{no $v_Y$ may be equal to $v$}. Assuming this, we deduce that $\calc_0$ has to be empty, since every curve in $\calc_0$ separates a component mapped to the star of $v$ from a component mapped to the star of some $v_i$, and the lemma follows.
 
 Let therefore $Y$ be a component  with  $v_Y=v$, and $Z$ its closure. The components of $\partial Z$ come from either $\calc_0$ or $\partial \Sigma$. The restriction $\pi_Z$ of $\pi$ to $\pi_1(Z)$ has an image contained in a conjugate of $\pi_1(\Sigma)$, which we may assume to be $\pi_1(\Sigma)$ itself. It is non-pinching (its kernel cannot contain the  fundamental group of an essential simple closed curve) by maximality of $\calc$, and it sends any  boundary subgroup $H$ of $\pi_1(Z)$ into a boundary subgroup of $\pi_1(\Sigma)$ because $\pi_Z(H)$ fixes an edge of $T$. By Lemma 6.2 of \cite{Perin_elementary}, there are three possibilities: either  $\partial Z=\es$, or the image of $\pi_Z$   is contained in a boundary subgroup of $\pi_1(\Sigma)$, or it has finite index in $\pi_1(\Sigma)$. We show that each possibility contradicts one of our assumptions.
 
If $\partial Z$ is empty   (so, in particular, $\calc_0=\es$ and $Z=S_0$), we note that $\pi_1(\Sigma)$ is a free group because $n\ge1$, and we represent $\pi_Z:\pi_1(Z)\to\pi_1(\Sigma)$ by a map from $Z$ to a graph. Maximality of $\calc$   implies that
 $Z=S_0$ is a sphere or a projective plane, so  the image of $\pi$ is trivial, a contradiction.  

Now suppose that the image of $\pi_Z$ is contained in a boundary subgroup $H$ of $\pi_1(\Sigma)$. 
 Denote by  $\Tilde v$ the vertex 
of $T$ fixed by $\pi_1(\Sigma)$, and by $ \tilde v\tilde v_i$ the 
 unique edge incident on $\Tilde v$ fixed by $H$ (a lift of some edge $vv_i$ of $\Gamma$).
 Consider the component $\tilde Y$ of the preimage of $Y$ in $\tilde S_0$ mapping to the star of $\tilde v$. The union of all regions adjacent to $\tilde Y$ is mapped by $\tilde f$ into the open   star of $\tilde v_i$,
 and we may redefine $\tilde f$ 
so as to remove $\calc_0\cap\partial Z$ from $\calc_0$, thus contradicting the original choice of $\tilde f$ (if $\calc_0\cap\partial Z=\es$, then $Z=S_0$ and   we may replace $\tilde f$ by the constant map equal to $\tilde v_i$).
 
If   the image of $\pi_Z$    has finite index $d$ in $\pi_1(\Sigma)$, then $d$ has to be $1$ because $\chi(\Sigma)\ge\chi(S_0)\ge\chi(Z)=d\chi(\Sigma)$, and we conclude from Lemma 3.12 of \cite{Perin_elementary} that $\Sigma=S_0=Z$ and $p$ is an isomorphism from $\pi_1(\Sigma)$ to itself,  contradicting our hypothesis.
\end{proof}

\begin{rem}    The proof shows the following more general result, which will be useful in \cite{GLS2}: \emph{Let $\Gamma$ and $\Sigma$ be as in Lemma \ref{lemcle}. Let $\Sigma'$ be a compact surface, and let $p:\pi_1(\Sigma')\to  G$  be a homomorphism  such that the image of every boundary subgroup of $\pi_1(\Sigma')$   is contained in a conjugate of some $G_{v_i}$.
Let  $\calc$ be a maximal family of pinched curves on $\Sigma'$, and let $S$ be a component of the surface obtained by cutting $\Sigma'$ along $\calc$. Then, up to conjugacy in $G$, the image of $\pi_1(S)$ by $p$ is contained in some $G_{v_i}$, or there is a subsurface $Z\inc S$ such that $p(\pi_1(Z))$ is a finite index subgroup of $\pi_1(\Sigma)$.
}
 
\end{rem}
\begin{cor} \label{mino}
Let   $\Gamma$ be as in Lemma \ref{lemcle}. Denote by $n$ the total number of vertices $v_i$ of $\Gamma$,  
by  $n_1$ the number of   $v_i$'s which have valence 1, and by $b$ the number of boundary components of $\Sigma$.

If  $p:\pi_1(\Sigma)\to G$ maps every boundary subgroup of $\pi_1(\Sigma)$ injectively into a $G$-conjugate, and $p$ is not  an isomorphism onto a $G$-conjugate of $\pi_1(\Sigma)$, then 
 the genus $g$ of $\Sigma$ satisfies  $g\ge n_1$ and $g+b\ge 2n$.
\end{cor}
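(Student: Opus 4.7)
The plan is to apply Lemma~\ref{lemcle} to a maximal family $\calc$ of pinched curves on $\Sigma$, obtaining components $S_1,\dots,S_m$ of $\Sigma$ cut along $\calc$, each with $p(\pi_1(S_j))$ contained in a $G$-conjugate of some $G_{v_{i_j}}$; equivalently, in the Bass--Serre tree $T$ of $\Gamma$, $p(\pi_1(S_j))$ fixes a vertex $\tilde v_j$ projecting to $v_{i_j}$. The two inequalities then follow from a labelling argument and a simple genus count.

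The first step is to show that for each boundary component $C$ of $\Sigma$ sitting in $S_j$, one has $v_{i_j}=v_{i(C)}$, where $v_{i(C)}$ denotes the non-$v$ endpoint in $\Gamma$ of the edge $e(C)$ associated with $C$. First, $p(H_C)\leq p(\pi_1(S_j))$ fixes $\tilde v_j$; moreover the conjugate-injectivity hypothesis places $p(H_C)$ inside an edge stabilizer, so $p(H_C)$ fixes some edge $\tilde e_C$ and therefore its other endpoint $\tilde v_{i(C)}$. If $\tilde v_j\neq\tilde v_{i(C)}$, then $p(H_C)$ fixes the whole geodesic path joining them in $T$. Since $\Gamma$ is bipartite with parts $\{v\}$ and $\{v_1,\dots,v_n\}$, so is $T$, and this path has even length $\geq 2$; at an intermediate lift $\tilde v^*$ of $v$ the subgroup $p(H_C)$ then lies in the intersection of the stabilizers of two distinct edges incident to $\tilde v^*$. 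Those stabilizers inside $G_{\tilde v^*}\cong\pi_1(\Sigma)$ are two distinct conjugates of boundary subgroups, whose intersection is trivial by malnormality of boundary subgroups in a surface group with $\chi(\Sigma)<0$. This forces $p(H_C)=1$, contradicting injectivity. As a consequence, all $\partial\Sigma$-boundaries lying in a single component $S_j$ go to the same $v_{i_j}$; in particular, for each valence-$1$ vertex $v_i$ with unique associated boundary $C_i\subset\Sigma$, the component $S_{j_i}$ containing $C_i$ has $v_{i_{j_i}}=v_i$ and has $C_i$ as its only $\partial\Sigma$-boundary, and the $S_{j_i}$'s for distinct valence-$1$ vertices are pairwise distinct.

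The second step is to show $g(S_{j_i})\geq 1$ for each valence-$1$ vertex $v_i$. If $S_{j_i}$ is non-orientable this is immediate from the definition of the genus. Otherwise $S_{j_i}$ is orientable with boundary, and if $g(S_{j_i})=0$ (i.e.\ $S_{j_i}$ is planar), then $\pi_1(S_{j_i})$ is free and the boundary classes of $S_{j_i}$ satisfy the single relation $c_i\cdot a_1\cdots a_r=1$, with $c_i$ representing $C_i$ and $a_1,\dots,a_r$ representing the $\calc$-boundaries of $S_{j_i}$. The $a_k$'s are killed by $p$ (pinching), so the relation yields $p(c_i)=1$, contradicting the injectivity of $p|_{H_{C_i}}$.

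Combining the two steps, the $n_1$ subsurfaces $S_{j_i}$ are pairwise disjoint, so the elementary inequality $\sum g(\Sigma_i)\leq g(\Sigma)$ from the preliminaries yields $g(\Sigma)\geq\sum_{v_i\text{ valence }1} g(S_{j_i})\geq n_1$. Since $b=\sum_i d_i\geq n_1\cdot 1+(n-n_1)\cdot 2=2n-n_1$, one concludes $g+b\geq 2n$. The main technical point is the first step: one must keep track of lifts in the Bass--Serre tree and invoke the malnormality of boundary subgroups in $\pi_1(\Sigma)$ to exclude the existence of an intermediate lift of $v$ at which $p(H_C)$ could fix two distinct edges.
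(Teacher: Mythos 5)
Your proof is correct and follows essentially the same route as the paper: fix a maximal pinching family $\calc$, apply Lemma~\ref{lemcle} to the pieces, show that for each valence-$1$ vertex $v_i$ the piece containing the associated boundary $C_i$ is dedicated to $v_i$ (so contains no other boundary of $\Sigma$) and cannot be planar, then sum genera and count edge-ends. The only difference is presentational: the paper asserts the "dedicated component'' step tersely ("since $v_i$ has valence $1$, the group $G_{v_i}$ cannot intersect non-trivially a conjugate of $\pi_1(C')$''), whereas you unwind it via the Bass--Serre tree and malnormality of boundary subgroups of $\pi_1(\Sigma)$ at an intermediate lift of $v$ — a cleaner justification of the same fact, not a different approach.
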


Recall that $g=\frac12(2-\chi(\Sigma)+b)$ if $\Sigma$ is orientable, and $g=2-\chi(\Sigma)+b$ otherwise. By a $G$-conjugate of $H$, we mean a subgroup of $G$ conjugate to $H$.

\begin{proof}

We fix a maximal family of pinched curves $\calc\inc\Sigma$, and we let $\hat \Sigma$ be the (possibly disconnected) surface   obtained by cutting $\Sigma$ along $\calc$.

Consider a vertex $v_i$ of $\Gamma$ of valence 1.  Let $C$ be the boundary component of $\Sigma$ associated to the edge $vv_i$, and let $S$ be the component of $\hat\Sigma$ containing $C$. By Lemma \ref{lemcle}, the image of $\pi_1(S)$ by the restriction of   
$p$  fixes a vertex of $T$ which is not a lift of $v$. Because of our assumption on $p(\pi_1(C))$, this vertex must be in the orbit of $v_i$. It follows that $C$ is the only boundary component of $\Sigma$ contained in $S$: since $v_i$ has valence 1, the group $G_{v_i}$ cannot intersect non-trivially a conjugate of $\pi_1(C')$ if $C'\ne C$ is another component of $\partial  \Sigma$. The boundary of $S$ thus consists of $C$ and curves from $\calc$. Since $\pi_1(C)$ is not contained in $\ker p$ but curves in $\calc$ are pinched, $S$ cannot be planar:
  it cannot be a sphere with holes since a generator  of $\pi_1(C)$ would then be a product of  elements representing pinched curves, 
contradicting     injectivity of $p$  on $\pi_1(C)$.

We conclude that $\hat \Sigma$ has at least  $n_1$ non-planar components. This implies $g\ge n_1$.
 The second inequality follows since  $n_1+b\ge 2n$.
\end{proof}

\section{Preretractions}

 \begin{dfn}[JSJ-like decomposition, {\cite[Definition 5.8]{Perin_elementary}}]
 Let $\Gamma$ be a graph of groups decomposition of a group $G$.  It is \emph{JSJ-like} if:

\begin{itemize}
 \item edge groups are infinite cyclic;
 \item at most one vertex of any given  edge is a surface-type vertex (in the sense of Definition \ref{stype}), and at most one carries a cyclic group;
 \item (Acylindricity) if a non-trivial element of $G$ fixes two distinct edges of $T$, then they are adjacent and their common vertex has cyclic stabilizer.
\end{itemize}
\end{dfn}

 The basic example of a JSJ-like decomposition is 
 the canonical cyclic JSJ decomposition of a torsion-free one-ended hyperbolic group $G$.
 \  More generally, the tree of cylinders  (in the sense of \cite{GL4}) of any   splitting of  a torsion-free CSA group $G$   over infinite cyclic   groups is
 JSJ-like.

 \begin{prop}[{\cite [Proposition 6.1]{Perin_elementary}}] \label {isomgr}

Let $\Gamma$ be a JSJ-like graph of groups decomposition of a group $G$. 
If $r:G\to G$ sends each vertex group  and each edge group isomorphically to a conjugate of itself, then $r$ is an automorphism.
\end{prop}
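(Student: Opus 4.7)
The plan is to exploit Bass--Serre theory by lifting $r$ to an $r$-equivariant simplicial map $f\colon T\to T$ of the Bass--Serre tree of $\Gamma$, then showing that $f$ is a simplicial isomorphism; the conclusion that $r$ is an automorphism follows from this via $r$-equivariance and minimality.

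\emph{Construction of $f$.} For each vertex $v$ of $T$ chosen as an orbit representative, the hypothesis gives some $g_v\in G$ with $r(G_v)=g_v G_v g_v^{-1}=\Stab_T(g_v\cdot v)$; set $f(v):=g_v\cdot v$ and extend equivariantly by $f(h\cdot v):=r(h)\cdot f(v)$. This is well defined because $r(G_v)$ stabilizes $g_v\cdot v$. A parallel construction on edges using the hypothesis for $G_e$ produces an $r$-equivariant graph morphism. To see that $f$ sends each edge of $T$ to a single edge (not to a point, and not to a longer path), use that for $e=[u,v]$, the inclusion $G_e\inc G_u\cap G_v$ gives $r(G_e)\inc r(G_u)\cap r(G_v)$, so $\Stab_T(f(e))=r(G_e)$ fixes both $f(u)$ and $f(v)$; acylindricity then bounds the subtree fixed by this cyclic group, and the constraint that at most one endpoint of $e$ is cyclic rules out the subdivision scenario (which would require cyclic endpoints on both sides of the cyclic middle vertex produced by acylindricity).

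\emph{$f$ is a simplicial isomorphism.} For injectivity on vertices, suppose $f(u)=f(v)$ with $u\neq v$. If $u$ and $v$ are non-adjacent, one of $G_u,G_v$ is non-cyclic (by the JSJ-like constraint on edge endpoints applied along the path $[u,v]$), so $r(G_u)=\Stab_T(g_u\cdot u)=\Stab_T(g_v\cdot v)$ would force a non-cyclic group to fix two distinct vertices, violating acylindricity. If $u,v$ are adjacent via $e$, the equality $r(G_u)=r(G_v)=\Stab_T(f(u))$ shows they have the same stabilizer-type, so by JSJ-likeness both are rigid; the image of $e$ in $\Gamma$ is then a loop at a rigid vertex, and this case is ruled out by analyzing how the two distinct boundary embeddings of $G_e$ at the loop interact with $r$ (they must map to two distinct edges in $T$ incident to $f(u)$, contradicting collapse). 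For surjectivity, the $r(G)$-orbit of $f(v)$ in $T$ has stabilizer exactly $r(G_v)=g_v G_v g_v^{-1}$, which equals the full $G$-stabilizer of $g_v\cdot v$; a counting of orbits shows that the image $f(T)$ meets every $G$-orbit of $T$, and as an $r(G)$-invariant subtree of $T$ it equals $T$, which simultaneously yields $r(G)=G$.

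\emph{Conclusion.} Once $f$ is a simplicial isomorphism, injectivity of $r$ is immediate: $\ker(r)$ fixes every point of $T$ (by $f$-surjectivity and $r$-equivariance), hence is trivial because $G$ acts faithfully on $T$ by minimality. Surjectivity of $r$ has been obtained along the way. The main obstacle will be the case in Step 3 of a collapsed edge descending to a loop at a rigid vertex: this is where the JSJ-like axioms on edge endpoints (at most one surface-type, at most one cyclic) and the acylindricity condition must be most delicately combined, exploiting the fact that the two sides of such a loop give distinct cyclic subgroups of the same rigid vertex group whose images under $r$ must remain distinct edge stabilizers.
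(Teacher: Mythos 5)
The paper does not give a direct proof of this proposition; it simply observes that the statement is a mild strengthening of Perin's Proposition 6.1 (relaxing ``restricts to conjugation on each vertex group'' to ``sends each vertex group isomorphically onto a conjugate'') and asserts that the proof is identical. Your overall strategy --- build an $r$-equivariant map $f\colon T\to T$, show it is a tree isomorphism, conclude $r\in\Aut(G)$ --- does match the shape of that argument. However, the places where your write-up is specific are exactly the places where it goes wrong.

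The central problem is your treatment of the ``subdivision scenario.'' You claim that if $e=[u,v]$ is an edge of $T$ and $f(u),f(v)$ were at distance $2$ with cyclic middle vertex $m$, this ``would require cyclic endpoints on both sides of the cyclic middle vertex,'' contradicting the JSJ-like constraint on $e$. This is backwards: since $m$ is cyclic, the JSJ-like condition forces the two outer vertices $f(u),f(v)$ to be \emph{non}-cyclic, and hence $G_u,G_v$ to be non-cyclic; two non-cyclic (rigid) endpoints on the edge $e$ are perfectly allowed by the definition. So the constraint you invoke does not rule out the configuration, and your construction of $f$ as a simplicial map is not established. (The same issue infects your ``parallel construction on edges'': you define $f(v)=g_v\cdot v$ and $f(e)=g_e\cdot e$ independently and assert without argument that the two are compatible, i.e.\ that $f(v)$ is an endpoint of $f(e)$ when $v$ is an endpoint of $e$.) The correct way this step is handled --- in the setting where $r$ restricts to conjugation, which is Perin's hypothesis --- is a centralizer argument: for adjacent $u,v$ one has $g_u^{-1}g_v\in Z_G(G_e)$, and acylindricity together with the bipartite-like structure forces $Z_G(G_e)$ to be elliptic and contained either in $G_e$ or in the cyclic vertex group at the cyclic end of $e$; one then adjusts $g_v$ by this element to make the two choices compatible. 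In the more general ``isomorphism onto a conjugate'' setting one has to replace the literal centralizer computation by an argument that $g_u^{-1}g_e$ sends $e$ to an edge whose stabilizer lies in $G_u$, and then use acylindricity to locate that edge near $u$. None of this is in your proposal, and the step you do give is incorrect.

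Two further, smaller gaps: your injectivity discussion for adjacent $u,v$ with $f(u)=f(v)$ ends with a promissory note (``ruled out by analyzing how the two distinct boundary embeddings of $G_e$ at the loop interact with $r$''), and your surjectivity argument (``a counting of orbits shows that the image $f(T)$ meets every $G$-orbit of $T$, and as an $r(G)$-invariant subtree it equals $T$'') is not a proof: $f(T)$ being $r(G)$-invariant and meeting every $G$-orbit does not by itself give $f(T)=T$, since $T$ need not be $r(G)$-minimal until after one already knows $r$ is onto. The way to close the surjectivity step is a local computation at each $f(v)$: show that the $r$-equivariant map from the edges at $v$ to the edges at $f(v)$ is a bijection, using that $\Stab_T(f(v))=r(G_v)$ and $\Stab_T(f(e))=r(G_e)$ for each incident edge, and then conclude that $f$ is a covering map of trees, hence an isomorphism. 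As written, your proposal identifies the right framework but leaves the genuinely delicate steps either unjustified or argued incorrectly.
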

 
This statement is stronger than  Proposition 6.1 of \cite{Perin_elementary}, but the proof is identical.
 If $G$ is torsion-free hyperbolic, $\Gamma$ may be any  graph of groups decomposition with infinite cyclic edge groups (one applies the proposition to its tree of cylinders).

 \begin{dfn}[Exceptional surfaces] \label{exsur}
 
 Pairs of pants, once-punctured Klein bottles, twice-punctured projective planes,  and closed non-orientable surfaces of genus 3, are surfaces with Euler characteristic -1 which do not carry pseudo-Anosov diffeomorphisms. We call them, as well as surface-type vertices associated to them, \emph{exceptional}.
 \end{dfn}

\begin{dfn}[Preretraction, {\cite[Definition 5.9]{Perin_elementary}}] \label{prere}
Let $\Gamma$ be a JSJ-like decomposition of a group $G$. A \emph{preretraction associated to $\Gamma$} is a homomorphism $r:G\to G$ such that, for each vertex group $G_v$:
\begin{enumerate}
\item if $G_v$ is not surface-type,  
the restriction of $r$ to  $G_v$   
is   conjugation  by some $g_v\in G$;
\item if $G_v$ is an exceptional surface-type vertex group, the restriction of $r$ to $G_v$  
is   conjugation by some $g_v\in G$;
\item
if $G_v$ is a non-exceptional surface-type vertex group,   then the   image of $G_v$ is non-abelian.
\end{enumerate}

Note that  this implies that the restriction of $r$ to any edge group of $\Gamma$ is a conjugation. 
\end{dfn}

 \begin{example}  
 Let $G$ be the fundamental group of a
non-exceptional closed surface. Any map from $G$  onto  a non-abelian free subgroup   is a non-injective preretraction (with $\Gamma$ the trivial splitting).
\end{example}

Conditions 2 and 3 are important
in order to draw conclusions from the existence of a non-injective preretraction (as in \cite{Perin_elementary}).
 In the present paper, however, we often do not need these assumptions.

\begin{dfn}[Weak preretraction]\label{weak}
  A map $r:G\to G$ satisfying      
 Condition 1  of Definition \ref{prere} will be called a \emph{weak preretraction}. 
\end{dfn}

Proposition \ref{isomgr} implies that a preretraction $r$ is an isomorphism if there is no non-exceptional surface-type vertex. More generally:  

\begin{prop} \label{rephr}
 Let $\Gamma$ be a JSJ-like decomposition of a   
group $G$. The following are equivalent:
\begin{itemize}
\item
there is a non-injective preretraction $r$ associated to $\Gamma$;
\item    $\Gamma$ has  a non-exceptional surface-type vertex group $G_v=\pi_1(\Sigma)$,
  and there is a map $p:\pi_1(\Sigma)\to G$ which is a conjugation on each boundary subgroup, has non-abelian image, and is not an isomorphism of $\pi_1(\Sigma)$ onto a conjugate.
\end{itemize}

Similarly, there exists  a non-injective \emph{weak} preretraction $r$ if and only if there is an  arbitrary surface-type $G_v$ with a map $p$ which   is a conjugation on each boundary subgroup  and is not an isomorphism of $\pi_1(\Sigma)$ onto a conjugate.
\end{prop}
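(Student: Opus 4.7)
The proposition is an equivalence in two directions; I will describe the preretraction version, as the weak variant is identical with conditions 2 and 3 of Definition \ref{prere} simply omitted.

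\emph{Forward direction.} Any preretraction $r$ acts by conjugation on every edge group of $\Gamma$, because the JSJ-like hypothesis forces each edge to have at least one non-surface-type endpoint, on which $r$ is a conjugation by condition 1 of Definition \ref{prere}; the same then holds on the edge subgroup. If $r$ is non-injective it is not an automorphism, so Proposition \ref{isomgr} provides a vertex $u$ on which $r|_{G_u}$ fails to be an isomorphism onto a $G$-conjugate. Since conjugations are always such isomorphisms, conditions 1 and 2 exclude $u$ non-surface-type or exceptional surface-type, so $u$ is a non-exceptional surface-type vertex $v$. Then $p := r|_{G_v}$ is a conjugation on each boundary subgroup (which is an edge group), has non-abelian image by condition 3, and is not an isomorphism onto a conjugate by the choice of $v$.

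\emph{Backward direction.} Given $v$ non-exceptional and $p$, we construct $r$. Fix a spanning tree $T_0$ of the underlying graph $\Gamma$, set $r|_{G_v} = p$, and assign to each vertex $w\ne v$ a conjugating element $h_w\in G$ by propagation along $T_0$: the initial edge of the unique $T_0$-path from $v$ to $w$ is incident on $v$ with edge group $\pi_1(C_i)$, on which $p$ equals $\mathrm{ad}(g_i)$, so the first off-$v$ vertex inherits $h=g_i$, propagated unchanged along subsequent tree edges (this is forced by compatibility on each shared edge group). Put $r|_{G_w}=\mathrm{ad}(h_w)$; matching on each tree edge group is then immediate. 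For each non-tree edge $e$ joining $u,u'$ with stable letter $t_e$, set $r(t_e)=h_{u'}t_e h_u^{-1}$ (with the convention $h_v := g_i$ when $e$ is incident on $v$ via $C_i$); a direct substitution confirms that the HNN relation at $e$ is preserved.

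\emph{Verification and non-injectivity.} The resulting $r$ is a homomorphism satisfying the three preretraction conditions: $r|_{G_w}$ is a conjugation for $w\ne v$ (handling conditions 1 and 2, and giving condition 3 at any other surface-type vertex by conjugation-invariance of non-abelianness), while $r|_{G_v}=p$ has non-abelian image by hypothesis. If $p$ is non-injective, so is $r$. If $p$ is injective but $p(\pi_1(\Sigma))$ is not a $G$-conjugate of $\pi_1(\Sigma)$, then Lemma \ref{lemcle} together with the Bass-Serre picture constrains $p(\pi_1(\Sigma))$ enough that $r$ cannot be an automorphism; in the torsion-free hyperbolic, co-Hopfian context in which Proposition \ref{rephr} is applied, this is equivalent to non-injectivity. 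The main obstacle is precisely this last step, together with the coherent definition of $r$ on the stable letters of non-tree edges; both rely on cyclicity of edge groups and the constrained Bass-Serre position of $p(\pi_1(\Sigma))$.
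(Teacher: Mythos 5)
The forward direction and the construction of $r$ in the backward direction are essentially the paper's approach. The paper simplifies the construction by first collapsing all edges of $\Gamma$ not adjacent to $v$, reducing to the single-star setting of Lemma \ref{lemcle}; your spanning-tree propagation is equivalent but more elaborate. Either works.

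The genuine gap is the non-injectivity of the extended map $r$. You acknowledge that this is ``the main obstacle'' but then dispose of it by appealing to ``the torsion-free hyperbolic, co-Hopfian context in which Proposition \ref{rephr} is applied.'' This is not a proof, for two reasons. First, the proposition is stated for an arbitrary group $G$ with a JSJ-like decomposition; co-Hopfianity is nowhere in the hypotheses, so you cannot use it. Second, even granting co-Hopfianity, you only argue that $r$ fails to be an automorphism, which is weaker than what is claimed and requires a further step. The paper's argument is direct and elementary: assuming $p$ injective, there is no pinching, so Lemma \ref{lemcle} puts $p(\pi_1(\Sigma))$ inside a conjugate of some $G_{v_i}$; one normalizes so that $\pi_1(\Sigma) = G_{\tilde v}$, $p$ is the identity on the edge stabilizer $G_{\tilde e}$ for a lift $\tilde e = \tilde v\tilde v_i$, and (after postcomposing with conjugation by an element of $G_{\tilde e}$) $r$ is the identity on $G_{\tilde v_i}$. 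Acylindricity then forces $p(\pi_1(\Sigma))$ to lie in $G_{\tilde v_i}$ itself. Now pick $x \in \pi_1(\Sigma) \setminus G_{\tilde v_i}$: since $r(x) \in G_{\tilde v_i}$ and $r$ is the identity there, $r(r(x)) = r(x)$ while $r(x) \neq x$, so $r$ is not injective. This uses only the JSJ-like hypotheses (notably acylindricity), not hyperbolicity or co-Hopfianity. You should also note that the degenerate case where $\Gamma$ is a single closed-surface vertex (so $r = p$ and there is no extension to do) is handled separately in the paper via co-Hopfianity of closed surface groups.
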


\begin{proof}
 The existence of $r$ implies that of $p$ by Proposition \ref{isomgr}. To prove the converse direction, we may collapse  the edges of $\Gamma$ not containing the surface-type vertex $v$ associated to $\Sigma$ and thus assume that $\Gamma$ is as in Lemma \ref{lemcle}  (unless $G$ is a non-exceptional  closed surface group,   in which case $p$ itself is a  preretraction which is non-injective because $G$ is co-Hopfian).  As in Lemma \ref{lemcle}, we denote by $n$ the number of vertices adjacent to $v$.
 
  Given $p$ which is a conjugation on each boundary subgroup, we may extend it ``by the identity'' to a (weak) preretraction $r$. We first illustrate this when $\Sigma$ has two boundary components  (so $n\le 2$). 

If 
 $n=2$, we have $G=A_1*_{C_1}\pi_1(\Sigma)*_{C_2}A_2$, and $p$   equals   conjugation by some $g_i$ on the cyclic group $C_i$; we then define $r$ as being 
 conjugation by $g_i$ on $A_i$. If $n=1$, then $G$  
  has presentation $\langle  \pi_1(\Sigma),A,t\mid c_1=a_1, tc_2t\m=a_2\rangle$, with $a_i\in A$ and $c_i$ generators of boundary subgroups $C_i$ of $\pi_1(\Sigma)$. If $p_i$ is conjugation by $g_i$ on $C_i$, we define $r$ as being conjugation by $g_1$ on $A$ and mapping $t $ to $g_1tg_2\m$.

 The general case is similar; if $\Gamma$ is not a tree,  one  chooses a maximal subtree  in order to get a presentation with stable letters as in the case $n=1$ above.

There remains to  check that $r$ is non-injective. 
We may assume that $p$ is injective. Then there is no pinching, so by Lemma \ref{lemcle} the image of $p$ is contained in a conjugate of some $G_{v_i}$. Let  $\tilde e=\tilde v\tilde v_i\inc T$ be a lift of   the edge $vv_i$. We may assume that $\pi_1(\Sigma)$ is the stabilizer of $\tilde v$, and $p$ is the identity on the stabilizer of $\tilde e$.   The extension $r$  then  
preserves the stabilizer of $\tilde v_i$, and up to postcomposing $r$ with the conjugation by some element of $G_{\Tilde e}$, we
may assume that $r$ is the identity on the stabilizer of $\Tilde v_i$. The image of $p$ fixes a vertex in the orbit of $\tilde v_i$, and 
by acylindricity this vertex must be 
  $\tilde v_i$. It follows that $r$ cannot be injective. 
\end{proof}

\begin{rem} 
 In particular, if there is a non-injective preretraction associated to an arbitrary   non-trivial JSJ-like decomposition, there is one associated with a $\Gamma$ as in Lemma \ref{lemcle}.
\end{rem}

\section{Preretractions and elementary equivalence} \label{eleq}
Let $G$ be a finitely generated group. It may be written as $A_1*\cdots*A_n*F$, with $A_i$ non-trivial, not isomorphic to $\Z$, freely indecomposable, and $F$ free. We call the $A_i$'s the Grushko factors of $G$.

 The following is folklore, but we give a proof for completeness.
 
\begin{prop}\label{plgr}
 Let $G$ and $G'$ be torsion-free hyperbolic groups. Assume that $G$ and $G'$ are elementarily equivalent. If $A$ is a Grushko factor of $G$, and $A$ has no non-injective preretraction associated to its cyclic  JSJ decomposition   $\Gamma_{can}$,
 then $A$ embeds into $G'$. 
\end{prop}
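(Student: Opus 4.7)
The plan is to express ``$A$ embeds into $G'$'' as an existential first-order sentence which holds in $G$ (trivially, using $A\leq G$) and hence, by $G\equiv G'$, in $G'$. Since $A$ is a non-cyclic Grushko factor of a torsion-free hyperbolic group, $A$ is itself one-ended torsion-free hyperbolic, and in particular finitely presented. Fix such a presentation $A=\langle a_1,\ldots,a_k\mid r_1,\ldots,r_m\rangle$. Homomorphisms $\phi:A\to H$ correspond to tuples $\bar x=(\phi(a_1),\ldots,\phi(a_k))\in H^k$ satisfying $\bigwedge_j r_j(\bar x)=1$.

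The heart of the argument is the following claim: there exists a finite set $W=\{w_1,\ldots,w_N\}\inc A\setminus\{1\}$ such that every homomorphism $\phi:A\to G'$ with $\phi(w_i)\neq 1$ for all $i$ is injective. Granting this claim, the existential sentence
$$\sigma_A:=\exists x_1\cdots\exists x_k\ \Big(\bigwedge_{j=1}^m r_j(\bar x)=1\;\wedge\;\bigwedge_{i=1}^N w_i(\bar x)\neq 1\Big)$$
is witnessed in $G$ by the generators $a_1,\ldots,a_k$ of $A\leq G$ (each $r_j$ is a relation and each $w_i$ is non-trivial in $A$, hence in $G$). By $G\equiv G'$, $\sigma_A$ holds in $G'$, and any witness tuple defines the desired embedding $A\hookrightarrow G'$.

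To prove the claim I would argue by contradiction via a Bestvina--Paulin / Sela shortening argument. If no such $W$ exists, exhaust $A\setminus\{1\}$ by finite sets $W_1\inc W_2\inc\cdots$ and select non-injective homomorphisms $\phi_n:A\to G'$ with $\phi_n(w)\neq 1$ for every $w\in W_n$. After conjugating in $G'$, split into two cases. If the $\phi_n(a_i)$ remain in a fixed ball of $G'$, there are only finitely many distinct $\phi_n$, so some fixed non-trivial $w\in A$ lies in $\ker\phi_n$ for all sufficiently large $n$, contradicting that $W_n$ eventually contains $w$. Otherwise the $\phi_n$ have unbounded displacement; precompose each with a modular automorphism $\sigma_n\in\Mod(A,\Gcan)$ realizing the shortest translation length in its orbit, rescale, and extract a non-trivial minimal small isometric action $A\actson T$ on an $\R$-tree with cyclic arc stabilizers. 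The Rips machine (via Guirardel's structure theorem) decomposes $T$, and the shortness condition pins down the limiting data on non-surface vertex groups of $\Gcan$ as conjugations inside $A$. On a suitable surface-type vertex $G_v=\pi_1(\Sigma)$ of $\Gcan$, this yields a map $p:\pi_1(\Sigma)\to A$ which is a conjugation on each boundary subgroup, has non-abelian image, and is not an isomorphism of $\pi_1(\Sigma)$ onto a conjugate. By Proposition \ref{rephr}, this provides a non-injective preretraction of $A$ associated to $\Gcan$, contradicting the hypothesis.

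The main obstacle lies in the second case of the claim, namely the extraction of the self-map $p:\pi_1(\Sigma)\to A$ from the limit $\R$-tree action. The Bestvina--Paulin limit is only an isometric action of $A$ on $T$, not a homomorphism into $A$; producing an actual map into $A$ itself requires the shortening-quotient machinery of Sela together with Perin's analysis of how surface-type vertex groups interact with limits of JSJ-like actions, as in \cite{Perin_elementary}. This is precisely where the rigidity hypothesis is used: it prevents the shortening process from stabilizing on a genuine automorphism, so any residual data from the limit is forced into exactly the shape forbidden by the hypothesis.
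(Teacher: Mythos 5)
The proposal takes a genuinely different route from the paper, but the route has a serious gap.

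The paper does \emph{not} use a pure existential sentence transferred from $G$ to $G'$. It transfers a $\forall\exists$ sentence in the other direction: by the shortening argument (Perin, Proposition 4.3), if $A$ does not embed into $G'$ there are finitely many non-trivial $r_i\in A$ such that every $f:A\to G'$ becomes $r_i$-killing after precomposition with a modular automorphism of $A$; this is encoded as a first-order sentence $\Phi$ that holds in $G'$, hence also in $G$. Applying $\Phi$ to the inclusion $A\hookrightarrow G$ and composing with the Grushko retraction $G\onto A$ produces a non-injective \emph{weak} preretraction $A\to A$; Perin's Proposition 5.12 then upgrades it to an actual preretraction, contradicting the hypothesis. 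The retraction $G\onto A$ is what produces a self-map of $A$; this is the crucial piece with no analogue in your setup.

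Your key claim --- that there is a \emph{finite} test set $W\inc A\setminus\{1\}$ such that any $\phi:A\to G'$ nonvanishing on $W$ is injective --- does not follow from the shortening argument as you invoke it. Shortening only gives a test set up to modular automorphisms: it produces $r_1,\ldots,r_k$ with $\ker(\phi\circ\sigma)\ni r_i$ for \emph{some} modular $\sigma$, equivalently $\ker\phi\ni\sigma(r_i)$. When $\Mod(A)$ is infinite --- which happens whenever $A$ admits any non-trivial cyclic splitting, and the hypothesis ``no non-injective preretraction'' is perfectly compatible with that (e.g.\ $A=B*_C D$ with $B,D$ rigid) --- the orbit $\{\sigma(r_i)\}$ is infinite and no fixed finite $W$ is obtained. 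Your sketch of the claim compounds this: after you precompose with the shortening automorphisms $\sigma_n$, the maps $\psi_n=\phi_n\circ\sigma_n$ no longer satisfy the $W_n$-avoidance property ($\psi_n$ is non-trivial only on $\sigma_n^{-1}(W_n)$), so the limiting argument no longer contradicts the assumed failure of the claim. Finally, in the unbounded case you obtain an isometric action $A\actson T$ on an $\R$-tree, but that is not a homomorphism, and there is no mechanism in your setup to convert it into a map $p:\pi_1(\Sigma)\to A$ with codomain $A$; the codomain of all your homomorphisms is $G'$, and nothing retracts onto $A$. The paper's $\forall\exists$ sentence $\Phi$, applied to the inclusion $A\hookrightarrow G$ followed by the retraction $G\onto A$, is precisely what supplies that missing self-map. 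Your ``finite test set'' claim is close in spirit to Sela's theory of elementary prototypes in Diophantine Geometry VII, but establishing it would require substantially more than the shortening argument you cite, and the paper deliberately structures its sentence to avoid having to prove it.
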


Such an $A$ is an example of an elementary prototype \cite[Definition 7.3]{Sela_diophantine7}.

\begin{proof}
 We assume that $A$ does not embed into $G'$, and we construct a non-injective preretraction. By Sela's shortening argument (see \cite [Proposition 4.3]{Perin_elementary}), there exist finitely many non-trivial elements $r_i\in A$ such that, given any homomorphism $f:A\to G'$ (necessarily non-injective),
   there is a modular automorphism $\sigma$ of $A$ such that the kernel of  $f'=f\circ\sigma$ contains one of the $r_i$'s. 

 Let $\Gamma_{can}$ be the canonical cyclic JSJ decomposition of $A$.
 By definition, 
 modular automorphisms act on non-surface vertex groups $G_v$ of $\Gamma_{can}$ by conjugation, so $f$ and $f'$ differ by an inner automorphism on such vertex groups $G_v$.
 If $\Gamma$ has an exceptional surface-type vertex group $G_u$ that has a Dehn twist of infinite order
 (i.e.\ if    $\Sigma$  is a once-punctured Klein bottle or a closed non-orientable surface of genus 3),
this property does not hold for $G_u$ (this detail was overlooked in \cite[Lemma 5.6]{Perin_elementary}).

To remedy this, one considers $\Gamma'_{can}$ obtained from $\Gamma_{can}$ by splitting $G_u$ along the fundamental group of 
a suitable     
2-sided simple closed curve  which is invariant under the mapping class group  
(see Remark 9.32  in \cite{GL3});
in $\Gamma'_{can}$, all exceptional surfaces have finite mapping class group,
and modular automorphisms of $A$ act by conjugation on all exceptional surface groups of $\Gamma'_{can}$.
 
Thus the following statement, which is expressible in first-order logic, holds (compare Section 5.4 of \cite{Perin_elementary}): 
  \emph{given any $f:A\to G'$ such that non-exceptional surface-type vertex groups $G_v$ of  $\Gamma'_{can}$ have non-abelian image, there is $f':A\to G'$ such that $\ker f' $ contains one of the $r_i$'s, non-exceptional  surface-type vertex groups $G_v$  have non-abelian image by $f'$, 
and for each other vertex group $G_v$  of $\Gamma'_{can}$ there is  $g\in G'$   
such that  $f'$ agrees  with  $i_g\circ f$  on $G_v$} (with $i_g$ denoting conjugation by $g$). 
 
 Since $G$ and $G'$ are elementarily equivalent, this  statement holds with $f$ the inclusion from $A$ to $G$, and yields a non-injective $f':A\to G$.   Composing with  a projection from $G$ onto $A$ yields 
 a non-injective weak preretraction from $A$ to $A$  associated to $\Gamma'_{can}$. 
In order to get a preretraction (i.e.\  to ensure that non-exceptional surface-type vertex groups have non-abelian image), we
  apply Proposition 5.12 of \cite{Perin_elementary}.
 By Proposition \ref{rephr}, this preretraction also gives a non-injective preretraction associated to $\Gamma_{can}$. 
\end{proof}

\begin{cor} \label{gelfree}
 If a  finitely generated group 
  $G$ is elementarily free, all its Grushko factors have non-injective preretractions associated to their cyclic  JSJ decompositions.
 \qed
\end{cor}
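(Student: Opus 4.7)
The corollary is essentially a direct consequence of Proposition \ref{plgr}, and my plan is to argue by contradiction using the fact that subgroups of free groups are free.

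The plan is to suppose, toward a contradiction, that some Grushko factor $A$ of $G$ has no non-injective preretraction associated to its cyclic JSJ decomposition $\Gamma_{can}$. Since $G$ is elementarily free, it is elementarily equivalent to some non-abelian free group $F$. Both $G$ and $F$ are torsion-free hyperbolic (for $G$ this is recalled in the introduction, citing \cite{Sela_diophantine6, Sela_diophantine1}; for $F$ it is classical), so Proposition \ref{plgr} applies with $G'=F$ and yields an embedding of $A$ into $F$.

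Next I would invoke the Nielsen-Schreier theorem: a subgroup of a free group is free. Hence $A$ is free. But by the defining properties of a Grushko factor, $A$ is non-trivial, freely indecomposable, and not isomorphic to $\Z$. A non-trivial free group is either infinite cyclic or freely decomposable of rank at least $2$, so these constraints are incompatible. This contradiction shows that $A$ must have a non-injective preretraction associated to $\Gamma_{can}$, which is the required conclusion.

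I do not expect any serious obstacle here: once Proposition \ref{plgr} is in hand, the argument reduces to the classical Nielsen-Schreier theorem together with the elementary observation that a freely indecomposable non-cyclic group cannot embed in a free group. The only subtlety to flag is ensuring that the hypotheses of Proposition \ref{plgr} are genuinely available, namely that $G$ itself is torsion-free hyperbolic so that it makes sense to speak of its Grushko factors being torsion-free hyperbolic and of their cyclic JSJ decompositions; this is guaranteed since finitely generated elementarily free groups are torsion-free hyperbolic.
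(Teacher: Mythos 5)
Your argument is correct and matches what the paper intends by leaving the corollary with only a \qed: take $G' $ to be a non-abelian free group (licit since finitely generated elementarily free groups are torsion-free hyperbolic), apply Proposition~\ref{plgr}, and note that a Grushko factor cannot embed in a free group because it is non-trivial, freely indecomposable, and not infinite cyclic. No gaps; this is the paper's own deduction spelled out.
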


Recall that every  finitely generated elementarily free group is torsion-free and hyperbolic 
  (\cite {Sela_diophantine1}, \cite{Sela_diophantine6}). 

\begin{cor}
 Let $G$ and $G'$ be one-ended  torsion-free hyperbolic groups. Suppose that $G,G'$   do not have non-injective preretractions associated to their cyclic  JSJ decompositions. 
If $G$ and $G'$ are elementarily equivalent,   they are isomorphic.
\end{cor}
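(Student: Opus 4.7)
The plan is to invoke Proposition \ref{plgr} symmetrically and combine it with the co-Hopf property of one-ended torsion-free hyperbolic groups recalled in the Preliminaries.

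First, since $G$ is one-ended, its Grushko decomposition is trivial: $G$ is its own unique Grushko factor, so one may take $A=G$ in Proposition \ref{plgr}. By hypothesis $G$ admits no non-injective preretraction associated to its canonical cyclic JSJ decomposition, so the proposition produces an embedding $\iota\colon G\hookrightarrow G'$. Elementary equivalence is symmetric and $G'$ satisfies the same hypothesis (and is also one-ended), so the same proposition, applied with the roles of $G$ and $G'$ exchanged, yields an embedding $\iota'\colon G'\hookrightarrow G$.

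Next, I would consider the composition $\iota'\circ \iota\colon G\to G$. It is injective as a composition of injections. Since $G$ is a one-ended torsion-free hyperbolic group, Sela's theorem recalled in the Preliminaries tells us $G$ is co-Hopfian \cite{Sela_structure}; hence this injective endomorphism is surjective, and thus an isomorphism. This forces $\iota'$ to be surjective, and as it is already injective we conclude that $\iota'\colon G'\to G$ is an isomorphism, so $G\cong G'$.

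There is no real obstacle here once Proposition \ref{plgr} is in hand: the corollary is essentially a Cantor--Schroeder--Bernstein-type argument powered by co-Hopfness. The only point to verify is that the one-endedness hypothesis makes $G$ (and $G'$) coincide with its sole Grushko factor, which is immediate from the definition, so Proposition \ref{plgr} applies directly without the need to disentangle free product structure.
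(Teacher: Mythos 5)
Your proof is correct and is essentially the paper's own argument: the paper's proof reads simply ``This follows from the proposition and the fact that $G$ and $G'$ are co-Hopfian,'' and you have just spelled out the standard Cantor--Schr\"oder--Bernstein-via-co-Hopfness step. Your observation that one-endedness reduces Proposition~\ref{plgr} to $A=G$ is exactly the intended use.
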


This follows from the proposition and the fact that $G$ and $G'$ are co-Hopfian \cite{Sela_structure}.

\section{Surfaces with sockets}

The fundamental group $G=\pi_1(\Sigma)$ of a closed hyperbolic surface $\Sigma$ is a limit group which is elementarily equivalent to a non-abelian free group, with one exception: if $\Sigma$ is the non-orientable surface of genus 3,   
then $G$ is not  even  a limit group.

In this section we consider fundamental groups of surfaces with sockets (which we simply call socket groups). They are obtained from $\pi_1(\Sigma)$, with $\Sigma$ a compact hyperbolic surface of genus $g$ with $b\ge1$ boundary components $B_i$, by adding a root to the  element $h_i$ representing $B_i$   (see Figure \ref{sock1}). More precisely, $G$ has presentation
$$\langle h_1,\dots,h_b, z_1,\dots,z_b,a_1,b_1),\dots,a_g,b_g\mid h_1\cdots h_b=[a_1,b_1]\cdots[a_g,b_g], z_i^{n_i}=h_i
\rangle\quad $$
or
$$\langle h_1,\dots,h_b, z_1,\dots,z_b,a_1, \dots,a_g \mid h_1\cdots h_b=a_1^2 \cdots a_g^2, z_i^{n_i}=h_i
\rangle,$$
depending on whether $\Sigma$ is orientable or not. The exponent $n_i$, the \emph{order} of the $i$-th socket, is a positive integer, and we always assume $n_i\ge3$: if $n_i=2$, we can remove the socket by attaching a M\"obius band to $B_i$.  

\begin{figure}[ht!] 
\centering
\includegraphics[width=.8\textwidth]{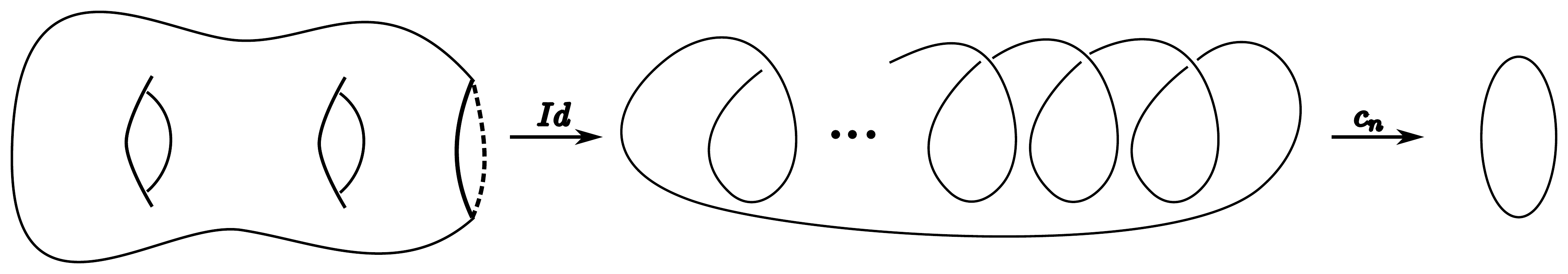}
\caption{A socket group can be seen topologically as the fundamental group of a surface for which every boundary component is glued onto a circle under an $n$-cover map for some $n$.}
\label{sock1}
\end{figure}

Socket groups are torsion-free one-ended hyperbolic groups. They were introduced  in  \cite{Sela_structure} to study splittings over maximal cyclic subgroups.

We shall show:
\begin{prop} \label{soclim} 
 The group $G$ is a limit group if and only if one of the following holds:
\begin{itemize}
\item the surface $\Sigma$ is orientable with at least four boundary components (i.e. $b\geq 4$); 
\item the surface $\Sigma$ is orientable with two or three boundary components  and genus  $g\geq 1$;
\item the surface $\Sigma$ is orientable with one boundary component and genus $g\geq \frac{n+1}2$, where $n$ is the order of the unique socket;
\item the surface $\Sigma$ is non-orientable and $b+g \geq 4$.
\end{itemize}
\end{prop}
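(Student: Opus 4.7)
The plan is to treat the two implications separately, with necessity based on universal identities in free groups and sufficiency on explicit constructions.

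For the \emph{necessity} direction, I would exhibit, for each excluded case, a universal sentence valid in every free group $\mathbb{F}$ but violated by $G$. The cases where the socket relation takes the Lyndon--Sch\"utzenberger form $x^\ell=y^m z^n$ with $\ell,m,n\ge 2$---namely orientable $(g,b)=(0,3)$ and non-orientable $(g,b)\in\{(1,2),(2,1)\}$---are handled by the classical Lyndon--Sch\"utzenberger theorem, which says any such identity in a free group forces $x,y,z$ to commute pairwise. Applied to the socket relation of $G$, this means the image of $\pi_1(\Sigma)$ together with the socket generators under any homomorphism $G\to\mathbb{F}$ lies in a cyclic subgroup; since $G$ is non-abelian, every such map has a non-trivial commutator in its kernel, precluding $\omega$-residual freeness. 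The remaining excluded case (orientable, one boundary, $g<(n+1)/2$) is handled by Culler's bound: if $Z^n=[A_1,B_1]\cdots[A_g,B_g]$ holds in a free group with $Z^n\ne 1$, then $g\ge\lceil(n+1)/2\rceil$. Consequently any homomorphism $\phi:G\to\mathbb{F}$ applied to the relation $z^n=[a_1,b_1]\cdots[a_g,b_g]$ must satisfy $\phi(z)^n=1$, hence $\phi(z)=1$, so no homomorphism is injective at $z$.

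For the \emph{sufficiency} direction, given an included case and a finite set $S\subset G\setminus\{1\}$, I would construct a homomorphism $G\to\mathbb{F}$ injective on $S$ in two stages. First, produce a seed homomorphism $\phi_0:G\to\mathbb{F}_2$ in which the socket relations are satisfied non-trivially: Culler's explicit identity realizes $[a,b]^n$ as a product of $\lceil(n+1)/2\rceil$ commutators in $\mathbb{F}_2$, and analogous identities handle the cases of several boundary components or of products of squares. The stated genus/boundary bounds are precisely the thresholds at which such realizations exist. Second, pre-compose $\phi_0$ with automorphisms of $G$ obtained from high powers of Dehn twists along disjoint essential simple closed curves in the interior of $\Sigma$; this yields a sequence $(\phi_k)$ of homomorphisms $G\to\mathbb{F}$ which a Sela-style shortening argument shows to be injective on $S$ for $k$ large enough.

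The principal technical obstacle is controlling the interaction between the two stages. The Dehn-twist perturbations must preserve the $n_i$-th-power identifications of the $h_i$'s so that they continue to define honest automorphisms of $G$; since interior Dehn twists fix the boundary subgroups up to conjugacy, coherence is automatic, but the genus/boundary bounds are exactly where enough interior curves are available to separate any prescribed finite subset $S$. Below the threshold, the socket relations force the image of $\pi_1(\Sigma)$ into a cyclic subgroup; at or above it, the Culler-type identities and the available Dehn twists together supply the needed flexibility.
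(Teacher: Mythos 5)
Your necessity direction is sound and essentially matches the paper: for the non-orientable cases with $b+g\le 3$ and the orientable $(g,b)=(0,3)$ case the Lyndon--Sch\"utzenberger theorem forces the image of $\pi_1(\Sigma)$ (hence of all of $G$) into a cyclic subgroup under any homomorphism to a free group, and for the orientable $b=1$, $g<(n+1)/2$ case the genus lower bound kills $\phi(z)$. One small correction: the lower bound ``a non-trivial $n$-th power has commutator genus $\ge \lceil(n+1)/2\rceil$'' is due to Comerford--Edmunds, not Culler; Culler proved the matching upper bound $[x,y]^n$ \emph{is} a product of $\lceil(n+1)/2\rceil$ commutators, which is what the paper uses for sufficiency.

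Your sufficiency direction, however, departs from the paper and has real gaps. The paper does not run any shortening argument: it invokes \cite[Proposition 4.21]{CG_compactifying}, which characterizes exactly when a socket group is a limit group in terms of a single equation having a solution in $F(x,y)$ that is non-cyclic and has every $z_i$ non-trivial. With that reduction in hand, sufficiency is a matter of exhibiting one explicit solution per case: Culler's identity for orientable $b=1$, Howie's explicit solution of $z_1^{n_1}z_2^{n_2}=[a_1,b_1]$ (namely $z_1=x^{n_2}$, $z_2=yx^{-n_1}y^{-1}$, $a_1=x^{n_1n_2}$, $b_1=y$) for orientable $b\ge 2$, $g\ge 1$, and elementary assignments by powers of $x$ and $y$ in the remaining cases. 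Your plan instead proposes to re-derive that reduction from scratch: build a seed homomorphism and then separate points by precomposing with high powers of Dehn twists via ``a Sela-style shortening argument.'' That is a legitimate but substantially harder route, and as written it is only a plan --- the shortening step is asserted, not carried out, and you give no explicit seed homomorphism outside the $b=1$ orientable case (you say ``analogous identities handle the cases of several boundary components or of products of squares,'' but for orientable $b\ge 2$, $g\ge 1$ the needed identity is the Howie trick above, which is not an obvious analogue of Culler's). As it stands, the sufficiency half would need either the explicit constructions plus the CG reduction, or a full execution of the Dehn-twist/shortening argument; neither is present.
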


\begin{prop} \label{socelfree}
The following assertions are equivalent: 
\begin{enumerate}
\item the group $G$ is elementarily equivalent to a free group;
\item the groups $G$ has a non-injective preretraction associated to its cyclic JSJ decomposition;
\item  the surface $\Sigma$ is non-orientable, $b+g\ge4$, every $n_i$ is even, and $g\ge b$.
\end{enumerate}

\end{prop}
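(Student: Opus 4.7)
I prove the three implications $(1)\Rightarrow(2)\Rightarrow(3)\Rightarrow(1)$ in order.

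For $(1)\Rightarrow(2)$: the socket group $G$ is torsion-free, hyperbolic and one-ended, so it is its own unique Grushko factor, and Corollary~\ref{gelfree} directly supplies a non-injective preretraction associated to the cyclic JSJ decomposition.

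For $(2)\Rightarrow(3)$: the socket presentation gives $G$ a JSJ-like decomposition $\Gamma$ with a single surface-type vertex $v$ carrying $\pi_1(\Sigma)$ and $b$ valence-one cyclic vertices $v_i$ carrying $\langle z_i\rangle$, so Lemma~\ref{lemcle} applies with $n=n_1=b$. By Proposition~\ref{rephr}, the non-injective preretraction supplies a map $p:\pi_1(\Sigma)\to G$ that is a conjugation on each boundary subgroup, has non-abelian image, and is not an isomorphism of $\pi_1(\Sigma)$ onto a conjugate; the same proposition forces $v$ to be non-exceptional, so $\chi(\Sigma)\le -2$. Corollary~\ref{mino} then yields $g\ge b$. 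To obtain the remaining conditions I fix a maximal family $\calc$ of pinched curves of $p$ and cut $\Sigma$ along $\calc$ to form $\hat\Sigma$. Since $p$ is injective (indeed a conjugation) on each $\pi_1(\partial_i)$ and distinct $\langle z_j\rangle$ are non-conjugate in $G$, no component of $\hat\Sigma$ can contain two boundary components of $\Sigma$; Lemma~\ref{lemcle} then forces the component $S_i$ containing $\partial_i$ to be mapped by $p$ into a $G$-conjugate of $\langle z_i\rangle$. Capping the pinched boundaries of $S_i$ with discs yields a one-holed surface $Z_i$, and the induced map $\pi_1(Z_i)\to\Z$ sends the unique boundary class to $\pm n_i\ne 0$. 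An orientable $Z_i$ would place the boundary in the commutator subgroup, sending it to $0$; this contradiction forces each $Z_i$ to be non-orientable, so $\Sigma$ is non-orientable and, combined with $\chi(\Sigma)\le -2$, gives $b+g\ge 4$. In a non-orientable $Z_i$ with one boundary, the boundary class equals twice the sum of the crosscap generators in the abelianization, so its image in $\Z$ is even: hence $n_i$ is even.

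For $(3)\Rightarrow(1)$: this is the main obstacle. The strategy is to exhibit $G$ as a hyperbolic floor (in the sense of~\cite{Perin_elementary}) over a non-abelian free subgroup $L\le G$, and to conclude by Sela's theorem that hyperbolic towers over free groups are elementarily free. Writing $n_i=2m_i$ (allowed since every $n_i$ is even), the identity $\prod z_i^{n_i}=\prod(z_i^{m_i})^2$ shows that the right-hand side $\prod a_j^2$ of the defining surface relation can be realised by sending each $a_j$ either to a power $z_k^{m_k}$ or to $1$. When $b\ge 2$, I take $L=\langle z_1,\dots,z_b\rangle$, which is a free subgroup of rank $b$ in $G$ since no relation of $G$ involves only the $z_i$'s, and I define $r:G\to L$ by $r(z_i)=z_i$, $r(a_j)=z_j^{m_j}$ for $j\le b$, and $r(a_j)=1$ for $j>b$ (allowed because $g\ge b$). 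A direct check shows $r$ preserves the defining relation, restricts to the identity on $L$, and satisfies $r(\pi_1(\Sigma))=\langle z_1^{m_1},\dots,z_b^{m_b}\rangle$, which is free of rank $b\ge 2$ and hence non-abelian. Combined with the graph-of-groups decomposition of $G$ obtained by merging the cyclic vertices into a single vertex $L$, this provides the desired hyperbolic floor structure over $F_b$. When $b=1$ (so $g\ge 3$), the naive base $\langle z_1\rangle$ is abelian; I instead split $\Sigma$ along a two-sided essential simple closed curve separating a non-orientable subsurface containing $\partial_1$ from the rest, and construct an analogous retraction onto a rank-$2$ free subgroup $\langle z_1,a_g\rangle$, using the factorisation $z_1^{n_1}a_g^{-2}=(z_1^{m_1})^2(a_g^{-1})^2$ as a product of two squares to absorb all remaining $a_j^2$'s. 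The delicate technical point — and the main obstacle — is to verify Perin's axioms for a hyperbolic floor in each case, especially for $b=1$, after which Sela's theorem yields elementary equivalence with a free group.
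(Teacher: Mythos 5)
Your argument is correct and follows essentially the same route as the paper: $(1)\Rightarrow(2)$ via Corollary~\ref{gelfree}, $(2)\Rightarrow(3)$ via Proposition~\ref{rephr}, Lemma~\ref{lemcle} and Corollary~\ref{mino}, and $(3)\Rightarrow(1)$ by an explicit retraction onto a non-abelian free subgroup giving a hyperbolic tower structure (the paper builds the preretraction directly, and for $b=1$ just picks any $z$ not commuting with $z_1$ and sends $a_2\mapsto z$, $a_3\mapsto z^{-1}$, the rest of the $a_i$ to $1$, which is a bit simpler than your split-along-a-curve construction with $\langle z_1,a_g\rangle$ but amounts to the same thing). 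One small ordering slip: you assert that non-exceptionality of $v$ gives $\chi(\Sigma)\le -2$ before establishing that $\Sigma$ is non-orientable, but the once-punctured torus is non-exceptional with $\chi=-1$; the correct deduction, as in the paper, is to first obtain non-orientability and $g\ge b$, after which non-exceptionality rules out the once-punctured Klein bottle and yields $b+g\ge 4$.
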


The group $G$ is naturally the fundamental group of a graph of groups $\Gamma$ with a central vertex carrying $\pi_1(\Sigma)$, and $b$ terminal vertices carrying $\langle z_i\rangle$ (the edge groups are generated by the $h_i$'s).
It is a torsion-free one-ended hyperbolic group, and the assumption $n_i\ge3$ ensures that $\Gamma$ is its canonical cyclic JSJ decomposition (see  Proposition 4.24   
in \cite{DG2}).

\begin{proof}[Proof of Proposition \ref {soclim}]
By Proposition 4.21 of \cite{CG_compactifying}, $G$ is a limit group if and only if  the equation 
 $$z_1^{n_1}\cdots z_b^{n_b}=[a_1,b_1]\cdots[a_g,b_g]  \qquad \text{or} \qquad 
 z_1^{n_1}\cdots z_b^{n_b}=a_1^2 \cdots a_g^2   ,$$  
with unknowns $z_i,a_i,b_i$  in a free group $F(x,y)$,  has a solution which is not contained in a cyclic subgroup, 
and with every $z_i$ non-trivial.
 
Recall that, given integers $k_i\ge2$,  a relation   $x_1^{k_1}\cdots x_p^{k_p}=1$  between elements of $F(x,y)$ implies that $\langle x_1,\dots,x_p\rangle$ is cyclic  
when $p\le 3$   \cite{LySc_equation}. 
 Thus $G$ is not a limit group if $\Sigma$ is non-orientable and $b+g\le3$. On the other hand, if $\Sigma$ is non-orientable and $b+g\ge4$, 
it is easy to find a suitable solution of the equation above, by setting all unknowns equal to powers of $x$ or $y$.

Now  suppose that $\Sigma$ is orientable and there is a single socket ($b=1$). If $g\ge\frac{n+1}2$, it follows from \cite{Culler_equations}   
that $[x,y]^n$ is a product of $g$ commutators, so $G$ is   a limit group. If $g<\frac{n+1}2$, no non-trivial $n$-th power is a product of $g$ 
commutators by \cite{ComEd_genus}, so $G$ is not a limit group.

Finally, assume that $\Sigma$ is orientable and $b\ge2$.
As suggested by Jim Howie, the equation $z_1^{n_1} z_2^{n_2}=[a_1,b_1]$ is solved by setting 
$z_1=x^{n_2}$, $z_2=yx^{-n_1}y\m$, $a_1=x^{n_1n_2}$,   $b_1=y$. 
It follows that $G$ is   a limit group when   $g\ge1$. If $g=0$, the fact recalled above implies that $G$ is   a limit group if and only if $b\ge4$.
\end{proof}

\begin{proof}[Proof of Proposition \ref {socelfree}]
We first prove $(2)\implies(3)$.
Assuming that there is a preretraction, we apply Lemma \ref {lemcle} to the map $p$ provided by Proposition \ref{rephr}. Cut $\Sigma$ open along $\calc$, and call 
$S_i$   the component of the surface thus obtained  which contains $B_i$. 
For each $i$, up to conjugation, we get a map from $\pi_1(S_i)$ to $\langle z_i\rangle$ sending $h_i$ to itself  
and killing   the fundamental group of every other boundary component of $S_i$. 
This is possible only if  $S_i$ is non-orientable and $n_i$ is even. 

The inequality $g\ge b$ follows from Corollary \ref{mino}.   
By Proposition \ref{isomgr}, the existence of a non-injective preretraction forces that of a non-exceptional surface. This  rules out the once-punctured Klein bottle ($g=2$, $b=1$) 
and thus implies $b+g\ge4$.

Conversely, we assume that $\Sigma$ and the $n_i$'s are as in   (3), and we construct a non-injective preretraction $r$ fixing every $z_i$ (hence every $h_i$). Write $n_i=2m_i$.
If $b=1$, then $g\geq 3$;  we choose $z\in G$ not commuting with $z_1$, and we map    $a_1$ to $z_1^{m_1}$, $a_2$ to $z$, $a_3$ to $z\m$, and $a_i$ to 1 for $i>3$. 
If $2\le b\le g$, we map $a_i$ to $z_i^{m_i}$ for $i\le b$, to 1 for $i>b$.  This proves $(3)\implies(2)$.

$(1)\implies(2)$ follows from Corollary \ref{gelfree}. Conversely, the map $r$ constructed above expresses $G$ as an extended hyperbolic tower over a free group, showing that $G$ is elementarily free \cite{Sela_diophantine6}.
\end{proof}

\begin{rem}\label{socelfree3}
The proof shows that  $G$ has a non-injective weak preretraction 
(in the sense of   Definition  \ref{weak}) associated to its cyclic JSJ decomposition  if and only if $\Sigma$ is non-orientable, every $n_i$ is even, and $g\ge b$.
\end{rem}

\begin{cor}
Let $G$ be a socket group, with $\Sigma$ a once-punctured orientable surface of genus 2, and a single socket of order 3
(see Figure \ref{SockInd3}).
Then $G$ is a one-ended hyperbolic limit group $G$ which is not elementarily free but contains an elementarily free subgroup of finite index.
\end{cor}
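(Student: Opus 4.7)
The first three properties follow quickly from previously established results. Socket groups are torsion-free, one-ended and hyperbolic (paragraph preceding Proposition~\ref{soclim}). With $g=2$, $b=1$ and $n_1=3$ we have $g=2=(n+1)/2$, so Proposition~\ref{soclim} (third bullet) gives that $G$ is a limit group. Since $\Sigma$ is orientable, condition~(3) of Proposition~\ref{socelfree} fails, so $G$ is not elementarily free.

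For the elementarily free subgroup of finite index, I would take $H$ to be the kernel of the homomorphism $\phi:G\to\Z/3\Z$ defined by $\phi(z_1)=1$ and $\phi(a_i)=\phi(b_i)=0$; this is well-defined since $\phi(z_1^3)=0=\phi([a_1,b_1][a_2,b_2])$. Topologically, $H$ is the fundamental group of the $3$-fold cyclic cover of the socket space corresponding to $\phi$: since $\pi_1(\Sigma)\subset H$, the surface $\Sigma$ lifts to three disjoint copies, whereas the socket circle lifts to a single circle (generated by $w:=z_1^3$), and each boundary of each copy of $\Sigma$ is glued identically to this central circle. Equivalently, the Bass--Serre analysis of the $H$-action on the JSJ tree of $G$ gives $H$ the structure of a star-shaped graph of groups with central vertex carrying $\langle w\rangle$, three leaf vertices carrying copies $\pi_1(\Sigma^{(j)})\cong\pi_1(\Sigma)$ ($j=1,2,3$), and three edges each identifying the boundary element $h_1^{(j)}$ of $\Sigma^{(j)}$ with $w$.

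The crux is to exhibit a hyperbolic floor structure on $H$. First observe that the subgroup $H':=\pi_1(\Sigma^{(1)})*_{h_1^{(1)}=h_1^{(2)}}\pi_1(\Sigma^{(2)})\subset H$ is isomorphic to $\pi_1(\Sigma_4)$, the fundamental group of the closed orientable surface of genus~$4$: using the identity $[x,y]^{-1}=[y,x]$, the amalgamating relation $[a_1^{(1)},b_1^{(1)}][a_2^{(1)},b_2^{(1)}]=[a_1^{(2)},b_1^{(2)}][a_2^{(2)},b_2^{(2)}]$ rewrites as $[a_1^{(1)},b_1^{(1)}][a_2^{(1)},b_2^{(1)}][b_2^{(2)},a_2^{(2)}][b_1^{(2)},a_1^{(2)}]=1$, the standard presentation of $\pi_1(\Sigma_4)$. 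Then $H$ decomposes as $H'*_{\langle h_1^{(1)}\rangle}\pi_1(\Sigma^{(3)})$, where $h_1^{(1)}$ represents a separating simple closed curve in $\Sigma_4$ identified with the boundary of $\Sigma^{(3)}$. In this two-vertex graph-of-groups decomposition, $\pi_1(\Sigma^{(3)})$ is a non-exceptional surface-type vertex ($g=2$, $\chi=-3$), while $\pi_1(\Sigma_4)$ is a rigid vertex.

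Define $r:H\to H'$ as the identity on $H'$ together with $a_i^{(3)}\mapsto a_i^{(1)}$, $b_i^{(3)}\mapsto b_i^{(1)}$. This is well-defined since $r$ sends $h_1^{(3)}=[a_1^{(3)},b_1^{(3)}][a_2^{(3)},b_2^{(3)}]$ to $[a_1^{(1)},b_1^{(1)}][a_2^{(1)},b_2^{(1)}]=h_1^{(1)}$, preserving the amalgamation; its restriction to $\pi_1(\Sigma^{(3)})$ is an isomorphism onto $\pi_1(\Sigma^{(1)})$, in particular with non-abelian image. Thus $r$ exhibits $H$ as a hyperbolic floor over $\pi_1(\Sigma_4)$. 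Since closed orientable surface groups of genus~$\geq 2$ are themselves hyperbolic towers over free groups (being elementarily free by the Sela / Kharlampovich--Myasnikov theorem~\cite{Sela_diophantine6,KhMy_elementary}), $H$ is a hyperbolic tower over a free group, and hence elementarily free. The main technical point is the Bass--Serre / covering-space computation giving the graph-of-groups structure of $H$; once that is in hand, the identification $H'\cong\pi_1(\Sigma_4)$ and the verification of the floor retraction are straightforward.
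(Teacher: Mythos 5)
Your proof is correct and follows exactly the same approach as the paper: pass to the kernel of the map $G\to\Z/3\Z$ killing the surface generators, describe the resulting index-3 subgroup as the fundamental group of three once-punctured genus-2 surfaces glued along their boundary, and retract onto a closed genus-4 surface group by identifying two of the copies, then cite \cite{Sela_diophantine6}. You spell out the Bass--Serre / covering-space computation and the explicit verification that $H'\cong\pi_1(\Sigma_4)$, which the paper leaves implicit, but the underlying argument is identical.

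One minor terminological quibble: you call $H'\cong\pi_1(\Sigma_4)$ a \emph{rigid} vertex in the two-vertex decomposition $H=H'*_{\langle h_1^{(1)}\rangle}\pi_1(\Sigma^{(3)})$. Surface groups are not rigid in the JSJ sense (they admit many cyclic splittings); the relevant fact for the hyperbolic-floor structure is simply that in this particular decomposition the vertex carrying $H'$ is not of surface type (its incident edge group is not a boundary subgroup, since $\Sigma_4$ is closed), so only $\pi_1(\Sigma^{(3)})$ is a surface-type vertex, and the retraction $r$ restricted to it has non-abelian image. This does not affect the validity of the argument.
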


\begin{rem}
In fact, $G$ has no non-injective weak preretraction by Remark \ref{socelfree3}.
\end{rem}

\begin{proof}
By Propositions   \ref{soclim} and    \ref{socelfree}, $G$ is a limit group that is  not elementarily free.
It has a subgroup $G_0$ of index 3, obtained as the kernel of a map from $G$ to $\Z/3\Z$ killing $a_1,b_1,a_2,b_2$, 
which is the fundamental group of the space obtained by gluing three once-punctured surfaces of genus 2 along their boundary. Identifying two of these surfaces yields a retraction from $G_0$ 
onto the fundamental group of the closed orientable surface of genus 4,  
so $G_0$  is elementarily free by \cite{Sela_diophantine6}.
\end{proof}
 
\begin{figure}[ht!]
\centering
\includegraphics[width=.9\textwidth]{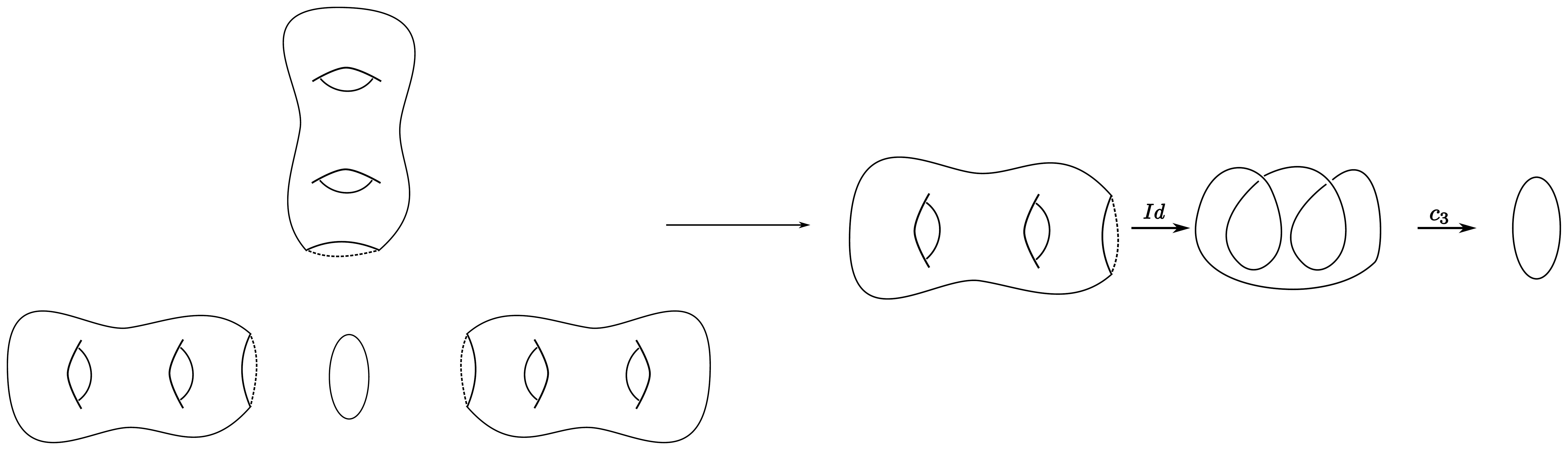}
\caption{A covering of   
  degree $3$.}
\label{SockInd3}
\end{figure}

We now consider a slightly different family of groups, where all sockets are identified. More precisely, 
$G$ is the fundamental group of a graph of groups with one surface-type vertex group $G_v=\pi_1(\Sigma)$,  where $\Sigma$ has at least two boundary components, and one vertex carrying a cyclic group $\langle z \rangle$. It is presented as 

$$\langle h_1,\dots,h_b, z,t_1,\dots,t_b,a_1,b_1,\dots,a_g,b_g\mid h_1\cdots h_b=[a_1,b_1]\cdots[a_g,b_g], t_iz^{n_i}t_i\m=h_i, t_1=1
\rangle\quad $$
or
$$\langle h_1,\dots,h_b, z,t_1,\dots,t_b,a_1, \dots,a_g \mid h_1\cdots h_b=a_1^2 \cdots a_g^2, t_iz^{n_i}t_i\m=h_i, t_1=1
\rangle,$$
depending on whether $\Sigma$ is orientable or not  (see Figure \ref{Amagalm3}). 

\begin{figure}[ht!]
\centering
\includegraphics[width=.3\textwidth]{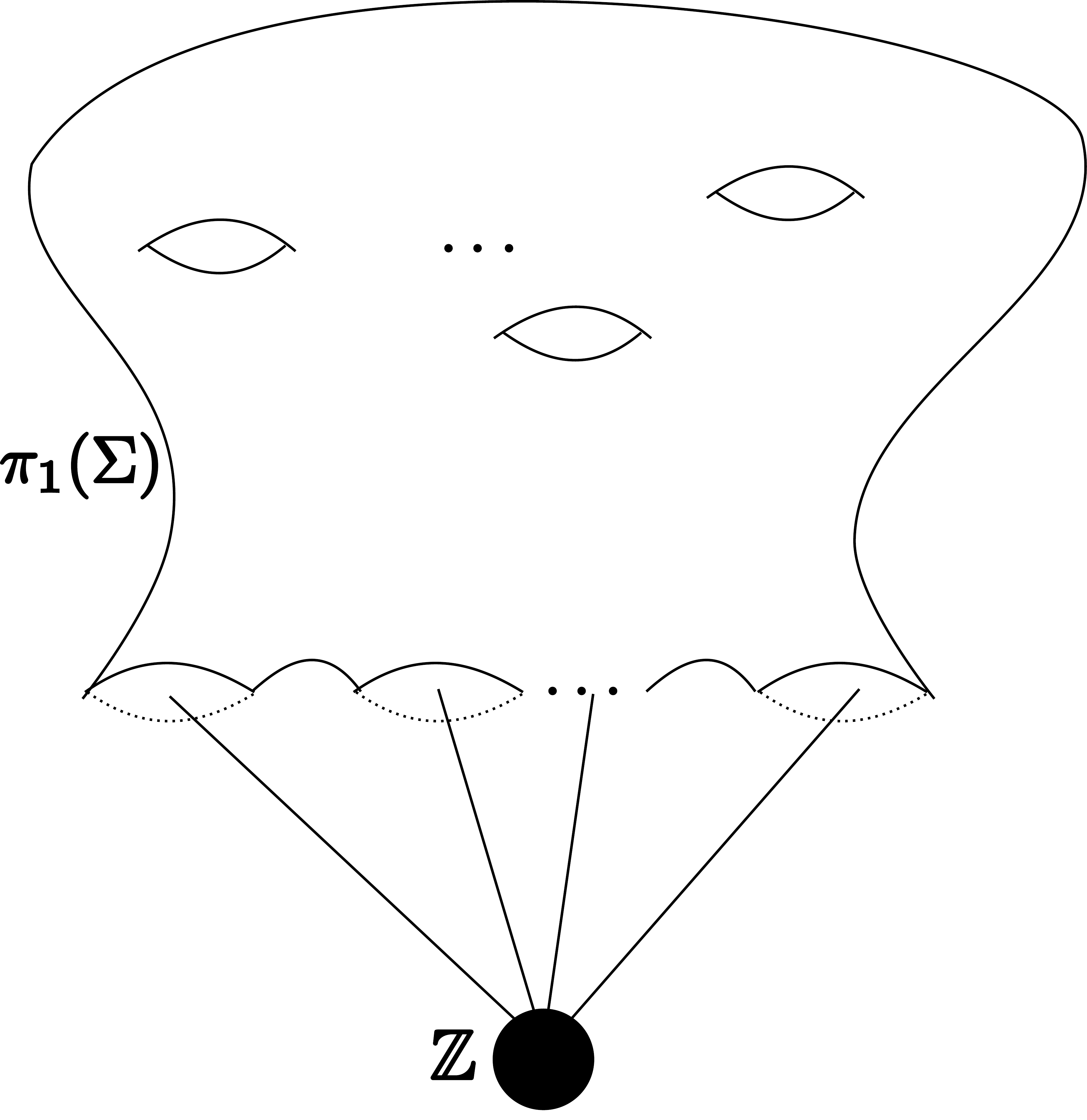}
\caption{A socket group with identified sockets.}
\label{Amagalm3}
\end{figure}

When $\Sigma$ is orientable one may orient the components of $\partial \Sigma$ in a consistent way, 
and the sign of the $n_i$'s matters. In the non-orientable case the components may be oriented separately and only $ | n_i | $ has a meaning.

\begin{prop}
Let $G$ be   a socket group with identified sockets as above.
 There is a non-injective weak preretraction   associated to the cyclic JSJ decomposition of $G$  if and only if the sum $\displaystyle
 \sum _{i=1}^bn_i$ is $0$ when $\Sigma$ is orientable, the sum $\displaystyle\sum_{i=1}^b  n_i$ is even when $\Sigma$ is non-orientable.
\end{prop}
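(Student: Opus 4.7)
The plan is to use Proposition \ref{rephr}: a non-injective weak preretraction associated to the cyclic JSJ decomposition of $G$ exists if and only if there is a homomorphism $p:\pi_1(\Sigma)\to G$ which is a conjugation on each boundary subgroup of $\pi_1(\Sigma)$ and is not an isomorphism onto a $G$-conjugate of $\pi_1(\Sigma)$. The JSJ here has one surface-type vertex carrying $\pi_1(\Sigma)$ and one cyclic vertex carrying $\langle z\rangle$, joined by $b$ edges with edge groups $\langle z^{n_i}\rangle$; this matches the setup of Lemma \ref{lemcle} with $n=1$.

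For the ``if'' direction, I construct $p$ with image inside $\langle z\rangle$. The natural choice is $p(h_i)=z^{n_i}$, which equals $t_i^{-1}h_it_i$ and hence is conjugation by $t_i^{-1}$ on $\langle h_i\rangle$. In the orientable case I set $p(a_j)=p(b_j)=1$, and the surface relation forces $z^{\sum n_i}=1$, which is satisfied iff $\sum n_i=0$. In the non-orientable case I set $p(a_1)=z^{(\sum n_i)/2}$ and $p(a_j)=1$ for $j>1$; the surface relation becomes $z^{\sum n_i}=z^{\sum n_i}$ and is well-defined iff $\sum n_i$ is even. Since $\mathrm{im}(p)\subseteq\langle z\rangle$ is abelian while $\pi_1(\Sigma)$ is not, $p$ cannot be an isomorphism onto a conjugate.

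For the converse direction, suppose $p$ exists. I apply Lemma \ref{lemcle} together with a maximal family $\calc$ of pinched curves: the image $p(\pi_1(S))$ of every component $S$ of $\hat\Sigma$ is contained in some conjugate $c_S\langle z\rangle c_S^{-1}$. For each boundary component $B_i\subset S$, write $p(h_i)=c_Sz^{k_i}c_S^{-1}$. Since $p$ is a conjugation on $\langle h_i\rangle$ and $h_i\sim_G z^{n_i}$, we have $z^{k_i}\sim_G z^{n_i}$; the hyperbolic translation length gives $|k_i|=|n_i|$, and the case $k_i=-n_i$ is ruled out by the standard fact that no element of a torsion-free hyperbolic group inverts a non-trivial element (if $wz^{n_i}w^{-1}=z^{-n_i}$, then $w^2$ lies in the cyclic centralizer $\langle c\rangle$ of $z^{n_i}$, and both possibilities $wcw^{-1}=c^{\pm 1}$ force $w$ to be a torsion element). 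Hence $k_i=n_i$.

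Finally, since the pinched curves lie in $\ker p$, the map $p$ factors through the surface $\Sigma'$ obtained by capping off the curves of $\calc$; the components of $\Sigma'$ are the capped-off components $(S)_0$, and $p$ restricts on each $\pi_1((S)_0)$ to a homomorphism into a cyclic group, hence factors through $H_1((S)_0)$. For orientable $\Sigma$, each $(S)_0$ is orientable with orientation inherited from $\Sigma$ (cutting along two-sided curves preserves orientability); the identity $\sum_{D\in\pi_0(\partial X)}[D]=0$ in $H_1(X;\Z)$ for any compact oriented surface $X$ yields $\sum_{i\in I_S}n_i=0$ on each $(S)_0$, and summing over components gives $\sum_{i=1}^{b}n_i=0$. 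For non-orientable $\Sigma$, I work with $\Z/2\Z$-coefficients: the analogous identity $\sum_{D}[D]=0$ in $H_1(X;\Z/2\Z)$ holds for any compact surface $X$, giving $\sum_{i\in I_S}n_i\equiv 0\pmod 2$ on each component, and summation gives $\sum n_i$ even. The main obstacle is justifying $k_i=n_i$ via the torsion-free hyperbolic structure; the homological step is then direct, with the mod-$2$ trick handling the potential non-orientability of subsurface components.
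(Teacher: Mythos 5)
Your proposal is correct and takes essentially the same approach as the paper. The paper's proof also applies Lemma~\ref{lemcle}, notes that each $h_i$ maps to $z^{n_i}$ (citing the fact that $z^p$ conjugate to $z^q$ forces $p=q$ in a torsion-free hyperbolic group, which you justify in detail via translation length and the no-inversion argument), concludes that $z^{\sum n_i}$ must be a product of squares or commutators inside $\langle z\rangle$, and handles the pinched case by ``applying the argument to each pinched surface''; your homological reformulation with $\sum_D[D]=0$ in $H_1$ (and $\Z/2\Z$ coefficients in the non-orientable case) is just a more explicit phrasing of that last step, and the construction in the converse direction (mapping $h_i\mapsto z^{n_i}$ and $a_1\mapsto z^{(\sum n_i)/2}$ if non-orientable) is identical. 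One small imprecision: you describe $\Sigma'$ as ``obtained by capping off the curves of $\calc$'' and say its components are the $(S)_0$'s, but you should first \emph{cut} along $\calc$ and then cap each resulting boundary circle; since your actual argument works component by component on the $(S)_0$'s, this does not affect the proof.
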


\begin{proof}
 Assume that there is $p:\pi_1(\Sigma)\to G$ which  is a conjugation on each boundary subgroup  and is not an isomorphism of $\pi_1(\Sigma)$ onto a conjugate   (see Proposition \ref{rephr}). We apply Lemma \ref{lemcle}. First suppose there is no pinching. All $h_i$'s are mapped into the same conjugate of $\langle z\rangle$, so we may assume that $h_i$ is mapped to a power of $z$, which must be $z^{n_i}$ (note that $z^p$ conjugate to $z^q$ implies $p=q$). We deduce that  $z^{\sum n_i}$ is a product of squares or commutators in $\langle z\rangle$. If there is pinching, apply the argument to each pinched surface.
 
 Conversely, we define a non-injective preretraction $r$   as follows: we  map $z$ to $z$, every $h_i$ to  $z^{n_i}$, every $t_i, a_i,b_i$ to 1, except that we  map $a_1$  to  $z^{( \sum n_i)/2}$ in the non-orientable case. 
\end{proof}

Deciding whether there is a non-injective (true) preretraction $r$ (with $\Sigma$ non-exceptional and the image of $\pi_1(\Sigma)$ non-abelian) is more subtle.

   First suppose that $\Sigma$ is orientable. If its genus is $\ge1$, one may make    $r(\pi_1(\Sigma))$ non-abelian by sending $a_1$ and $b_1$ to 
some  $x\in G$ which does not commute with $z$.

If  $\Sigma
$ is a 4-punctured sphere and $r$ exists, Lemma \ref{lemcle} implies that there is a pinched curve.  It must divide $\Sigma $ into two pairs of pants, each containing two boundary components of $\Sigma$, say $B_1$ and $B_2$ on one side, $B_3$ and $B_4$ on the other. One must then have $n_1+n_2=n_3+n_4=0$. Conversely, if this holds, 
 one defines $r$ on the generating set   $\{z,t_2,t_3, t_4\}$ by sending $z$ to itself, $t_2$ to the trivial element, and by sending $t_3$ and $t_4$ to 
some  $x\in G$ such that $z$ and $xzx\m$ do not commute. This argument shows:

 \begin{prop}
Let $G$ be a socket group with identified sockets as above, with   $\Sigma$ an orientable surface of genus $g$. 
There is a non-injective preretraction (i.e. $G$ is elementarily free) if and only if the sum $\displaystyle \sum _{i=1}^bn_i$ is $0$ and either:
    \begin{itemize}
     \item[(i)]    
     $g\geq 1$, or 
     \item[(ii)]   
     $g=0$ and there is a proper nonempty subset $I\inc \{1,\dots, b\}$ with $\sum_{i\in I} n_i=0$.      \qed
     \end{itemize}

\end{prop}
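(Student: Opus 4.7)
I would follow the template of Proposition \ref{socelfree}, with substantial extra work needed in the planar case $g=0$. For the forward direction, I would assume a non-injective preretraction exists and invoke Proposition \ref{rephr} to obtain a map $p:\pi_1(\Sigma)\to G$ which is a conjugation on every boundary subgroup, has non-abelian image, and is not an isomorphism onto a conjugate; the preceding proposition on weak preretractions then forces $\sum_{i=1}^b n_i=0$. When $g\geq 1$ the conclusion (i) is automatic and nothing more is required.

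The substance of the forward direction is the case $g=0$. Here I would apply Lemma \ref{lemcle} to a maximal family $\calc$ of pinched curves, using that the only non-surface vertex group of the JSJ is $\langle z\rangle$. Each component $S$ of the cut surface $\hat\Sigma$ then satisfies $p(\pi_1(S))\inc u_S\langle z\rangle u_S^{-1}$ for some $u_S\in G$. Since $G$ is torsion-free hyperbolic and $z$ is primitive, the normalizer of $\langle z\rangle$ coincides with $\langle z\rangle$; combined with the hypothesis that $p$ is a conjugation on each $\langle h_i\rangle$, this upgrades the containment to the precise formula $p(h_i)=u_S z^{n_i}u_S^{-1}$ for every $i$ with $B_i\inc S$, with one common $u_S$. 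The planar boundary relation in $\pi_1(S)$, together with $p$ killing each pinched curve, then yields $\sum_{i:B_i\inc S}n_i=0$, which provides the required proper nonempty $I$ as soon as $J_S:=\{i:B_i\inc S\}$ is proper. The main obstacle I anticipate is excluding the degenerate configuration in which a single component $S_0$ carries every $B_i$ and all other components of $\hat\Sigma$ are internal (bounded solely by $\calc$-curves); in that case $p$ would be trivial on each internal piece (its fundamental group being generated by pinched boundaries), so the whole image of $p$ would lie in the abelian $u_{S_0}\langle z\rangle u_{S_0}^{-1}$, contradicting Proposition \ref{rephr}.

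For the converse in case (i), I would define $p$ by $p(h_i)=z^{n_i}$, $p(a_1)=p(b_1)=x$ with $x\in G$ not commuting with $z$, and $p(a_j)=p(b_j)=1$ for $j\geq 2$: the defining relation becomes $z^{\sum n_i}=[x,x]\cdots=1$, the image contains the non-abelian $\langle z,x\rangle$, and non-injectivity is clear. In case (ii) I would pick a simple closed curve $C_0$ on $\Sigma$ separating $\{B_i:i\in I\}$ from its complement, giving $\pi_1(\Sigma)=\pi_1(\Sigma_1)\ast_{\langle c_0\rangle}\pi_1(\Sigma_2)$, and define $p$ on $\pi_1(\Sigma_1)$ by $h_i\mapsto z^{n_i}$ and on $\pi_1(\Sigma_2)$ by $h_i\mapsto xz^{n_i}x^{-1}$ with $x$ not commuting with $z$; both sides send $c_0$ to $1$ since the two partial sums vanish, so they glue to a map with non-abelian image $\langle z,x\rangle$. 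In each case Proposition \ref{rephr} converts $p$ into the desired non-injective preretraction, and the resulting retraction exhibits $G$ as an extended hyperbolic tower over the free group $\langle z,x\rangle$, yielding elementary freeness via Sela's theorem exactly as at the end of the proof of Proposition \ref{socelfree}.
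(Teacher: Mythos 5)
Your proof is correct and follows essentially the same approach as the paper, which only writes out the $4$-punctured-sphere case of the planar forward direction explicitly and leaves the general pinched-curve analysis (which you carry out in detail, including the exclusion of the configuration where all $B_i$ lie in one component via the abelian-image contradiction) to the reader. Your converse constructions, gluing $h_i\mapsto z^{n_i}$ and $h_i\mapsto xz^{n_i}x^{-1}$ across the separating curve $C_0$, match the paper's explicit definition of $r$ on $\{z,t_2,t_3,t_4\}$ in the $b=4$ case.
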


When $\Sigma$ is non-orientable, the easy case is when $g\ge3$ since 
one may use $a_2$ and $a_3$ to make $r(\pi_1(\Sigma))$ non-abelian. When $g$ is 1 or 2, 
one has to consider the ways in which a maximal family of pinched curves may divide $\Sigma$. A case by case analysis yields:

\begin{prop}
Let $G$ be a socket group with identified sockets as above, with   $\Sigma$ a non-orientable surface of genus $g$.
Then there is a non-injective preretraction (i.e. $G$ is elementarily free) if and only if the sum  $\displaystyle
 \sum _{i=1}^bn_i$ is even and one of the following holds:
    \begin{itemize}
    \item[(i)]    
    $g\geq 3$;
    \item[(ii)]   $g=2$ and $\Sigma$ has  
at least three boundary components (i.e.\ $b\geq 3$);  
    \item[(iii)]     
    $g=2$,   the surface   $\Sigma$ has two boundary components (i.e.\ $\Sigma$ is a twice punctured Klein bottle), and  
    $ | n_1 | = | n_2 | $ 
     or both $n_1, n_2$ are even;   
    \item[(iv)]  
     $g=1$, the surface $\Sigma$ has   
at least three boundary components (i.e.\ $b\geq 3$), and there exist   a proper subset  $I\inc \{1,\dots, b\}$ and signs $\eps_i=\pm1$ with $\sum_{i\in I} \eps_i n_i=0$.
   \qed
     \end{itemize}
  
\end{prop}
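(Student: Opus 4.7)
The plan is to prove both directions via Proposition \ref{rephr} combined with Lemma \ref{lemcle}, in the same spirit as Proposition \ref{socelfree} and the preceding proposition on weak preretractions. The parity of $\sum n_i$ is automatic from that proposition, since every preretraction is a weak preretraction. Moreover, Proposition \ref{rephr} requires $\Sigma$ non-exceptional; combined with $b\ge 2$, this rules out the twice-punctured projective plane and so forces $b\ge 3$ when $g=1$, with no further constraint when $g\ge 2$.

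For the forward direction, we extract $p:\pi_1(\Sigma)\to G$ via Proposition \ref{rephr} and apply Lemma \ref{lemcle} to a maximal family $\calc$ of pinched curves. Each component $S_k$ of $\Sigma\setminus\calc$ has $p(\pi_1(S_k))\inc A_k$ for some conjugate $A_k$ of $\langle z\rangle$, and $\calc$ is nonempty (else the image is cyclic, contradicting non-abelianness). Writing $I_k\inc\{1,\dots,b\}$ for the indices of $\Sigma$-boundaries lying in $S_k$, abelianization of the surface relation of $S_k$ (together with $p(c)=1$ for pinched $c$) yields a relation $\sum_{i\in I_k}\eps_{k,i}n_i=0$ with signs $\eps_{k,i}=\pm 1$ when $S_k$ is orientable, and only the parity condition on $\sum_{i\in I_k}n_i$ when $S_k$ is non-orientable.

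Cases (i) and (ii) require nothing further. In case (iii), with $\Sigma$ the twice-punctured Klein bottle $N_{2,2}$, we distinguish whether $\calc$ separates $B_1$ from $B_2$. If it does, both pieces must be non-orientable (an orientable piece with a single $B_i$ would force the impossible $n_i=0$), whence both $n_i$ are even. If it does not, a case analysis on the topological type of the common piece shows that, when it is non-orientable, Euler-characteristic accounting forces every other piece to be a Möbius band, hence trivially mapped, contradicting non-abelianness; so the common piece is orientable, giving $\eps_1 n_1 \pm \eps_2 n_2=0$ and hence $|n_1|=|n_2|$. In case (iv), $\Sigma=N_{1,b}$ with $b\ge 3$: since cutting $N_{1,b}$ along a two-sided non-separating curve would produce a surface with incompatible Euler characteristic and boundary count, every pinched curve separates $\Sigma$; hence exactly one piece $S_0$ contains the crosscap, has non-orientable genus $1$, and, if $I_0=\es$, maps trivially through $p$ by torsion-freeness of $G$ (since its single crosscap generator $a$ satisfies $p(a)^2=1$). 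An Euler-characteristic count then rules out configurations in which every orientable piece has $I_k\in\{\es,\{1,\dots,b\}\}$ while preserving non-abelianness; thus some orientable piece provides the required proper nonempty $I$.

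For the converse, we construct $p$ explicitly in each case by choosing a pinching realizing the stated numerical condition, sending each piece into a cyclic subgroup so that the abelianized surface relation is satisfied, and placing two adjacent pieces (across a pinched curve) into non-commuting conjugates of $\langle z\rangle$ to guarantee non-abelian image; Proposition \ref{rephr} then yields the required $r$. For instance, in case (iii) with both $n_i$ even, we cut $N_{2,2}$ along a separating curve into two $N_{1,2}$-pieces and send them into different conjugates; with $|n_1|=|n_2|$ we cut along a non-separating curve to produce an orientable $4$-holed sphere and use the HNN stable letter to map into a different conjugate; in case (iv), we cut off the orientable piece whose $\Sigma$-boundaries are indexed by the given $I$. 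The main obstacle is the topological bookkeeping in cases (iii) and (iv): one must enumerate the maximal pinching configurations in $N_{1,b}$ and $N_{2,2}$ and rule out all those incompatible with the stated numerical conditions, with Euler-characteristic accounting and the absence of non-separating two-sided essential curves in $N_{1,b}$ serving as the key ingredients.
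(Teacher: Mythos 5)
Your proof is correct and takes essentially the same approach the paper sketches: the paper disposes of $g\ge 3$ by the $a_2,a_3$ construction and explicitly says that for $g=1,2$ ``one has to consider the ways in which a maximal family of pinched curves may divide $\Sigma$,'' leaving the case-by-case analysis to the reader. Your proposal carries out exactly that program --- the parity constraint from the preceding weak-preretraction proposition, the exclusion of the exceptional $N_{1,2}$ via Proposition~\ref{rephr}, and the Euler-characteristic bookkeeping of the pinched pieces supplied by Lemma~\ref{lemcle} --- so it fills in the details the paper omits rather than taking a different route.
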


\section{Finite index subgroups} \label{fis}

The most obvious examples of limit groups, namely free abelian groups, free groups, surface groups, are elementarily equivalent to their finite index subgroups. 
The goal of this section is to show that these are basically the only examples of limit groups with this property, at least among hyperbolic limit groups.

\begin{thm}  \label{main}
 Let $G$ be a hyperbolic limit group, or a torsion-free cubulable hyperbolic group. If $G$ is not a free product of cyclic groups and of  surface groups, it has infinitely many normal finite index subgroups which are all different up to elementary equivalence. 
\end{thm}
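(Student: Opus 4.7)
The plan is to reduce to the one-ended case and then engineer finite-sheeted covers whose cyclic JSJ decompositions satisfy the ``elementary prototype'' criterion recalled at the end of the introduction. By Grushko, $G=A_1*\cdots*A_n*\bbF_m$; since $G$ is not a free product of cyclic and surface groups, some Grushko factor $A=A_i$ is neither cyclic nor a closed surface group. I would produce normal finite index subgroups $H_k \trianglelefteq A$, each an elementary prototype, which are pairwise non-elementarily equivalent, and lift them via the retraction $r\colon G\onto A$ killing the remaining Grushko factors: $K_k:=r\m(H_k)$ is a normal finite index subgroup of $G$ whose Grushko decomposition contains $H_k$ (up to conjugacy) as a factor, by the Kurosh subgroup theorem.

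Constructing the $H_k$'s is the heart of the argument. The canonical cyclic JSJ decomposition $\Gcan$ of $A$ is non-trivial, since $A$ is one-ended torsion-free hyperbolic and not a surface group. The criterion stated in the introduction says that such a group is an elementary prototype provided its cyclic JSJ has only orientable surfaces, has girth large compared to surface complexities, and has edge groups injecting into the abelianization. I would build $H_k$'s whose JSJ $\Gamma_k$ is a finite cover of $\Gcan$ achieving all three conditions: (a)~take the orientation double cover to eliminate non-orientable surface-type vertices; (b)~pass to a further graph cover (via a map to a finite quotient of $H_1(\Gcan)$) to multiply surface genera and enlarge girth arbitrarily; (c)~ensure each cyclic edge subgroup of $\Gcan$ injects into the abelianization. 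Condition~(c) is exactly where the \emph{limit group} or \emph{cubulable} hypothesis is used: limit groups are virtually subgroups of right-angled Artin groups and in particular admit finite quotients separating any prescribed cyclic subgroup, while torsion-free cubulable hyperbolic groups are virtually compact special by Agol--Wise; either hypothesis yields a finite index subgroup in which the finitely many edge groups of $\Gcan$ inject into $H_1$. Intersecting the three finite index subgroups produced by (a), (b), (c) (replacing them by their normal cores if necessary) keeps normality in $A$.

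To distinguish the $H_k$'s, I would arrange a numerical invariant---say the total genus of surface-type vertex groups in $\Gamma_k$, or the first Betti number of $H_k$---to be strictly increasing in $k$, so that the $H_k$'s are pairwise non-isomorphic. By Proposition~\ref{plgr} applied to the Grushko factor $H_k$ of $K_k$, if $K_k$ and $K_{k'}$ were elementarily equivalent with $k<k'$, then $H_k$ would embed in $K_{k'}$, and being one-ended torsion-free hyperbolic it would embed in one of its Grushko factors. By calibrating the invariants so that the Betti number of $H_k$ eventually dominates both that of every $H_{k'}$ with $k'<k$ \emph{and} those of the other Grushko factors $A_j$ ($j\ne i$) of $G$, no such embedding is possible, contradicting elementary equivalence.

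The main obstacle is step~(c): producing a \emph{normal} finite index subgroup of $A$ in which every edge of $\Gcan$ injects into the abelianization, while simultaneously preserving the orientability gains of (a) and leaving room for the further covers required by (b). Separability or virtual specialness is the algebraic input, but since the additional graph covers needed for (b) may a priori collapse edge conjugacy classes, the right order of operations is presumably to fix (c) first inside a characteristic finite index subgroup of $A$, and only then to realize (a) and (b) by further graph covers of the resulting JSJ; verifying that arbitrarily large girth remains attainable at this second stage, without undoing (c), is the delicate combinatorial point of the argument.
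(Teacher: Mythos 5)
Your construction of elementary prototypes inside the Grushko factor $A$ --- pass to a characteristic finite index subgroup so that the canonical cyclic JSJ has only orientable surfaces, edge groups injecting into the abelianization (via residual freeness, resp.\ Agol--Wise specialness), and girth large compared to surface complexity, then take further graph covers --- is essentially the content of Lemmas~\ref{abquot}, \ref{bongpe} and \ref{step2} and of Theorem~\ref{vrai}, and the ``order of operations'' worry you raise is resolved exactly as you guess: the first two conditions are inherited by finite index subgroups, so one imposes them first and fixes the girth last by a graph cover that leaves the vertex surfaces unchanged. Where you genuinely diverge is in how you return from $A$ to $G$ and how you distinguish the resulting subgroups, and that is where the argument breaks.

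Taking $K_k=r\m(H_k)$ only replaces the $A_i$-factors of the Kurosh decomposition of $K_k$ by copies of $H_k$; the remaining one-ended Grushko factors of $K_k$ are conjugates of the \emph{unchanged} $A_j$, $j\ne i$, which need not be elementary prototypes. So Proposition~\ref{plgr} applied to $K_k\equiv K_{k'}$ only yields an embedding of $H_k$ into \emph{some} one-ended Grushko factor of $K_{k'}$, which may be a copy of $H_{k'}$ or may be one of the fixed $A_j$'s, and the chain cannot be continued from an $A_j$. Your proposal to rule this out by arranging the first Betti number of $H_k$ to dominate those of $H_{k'}$ and of the $A_j$'s does not work: embeddings of torsion-free hyperbolic groups do not decrease Betti number (a genus-$100$ surface group embeds with finite index in a genus-$2$ surface group), and the embeddings produced by Proposition~\ref{plgr} are not of finite index in any case, so Betti number is not an obstruction here. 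Even replacing Betti number by co-Hopfianity, as the paper does, fails in your setup because of the uncontrolled case $H_k\hookrightarrow A_j$. The paper avoids both problems by defining $G_n$ as the kernel of $G\onto\prod_i A_i/A_i(n)$, so that \emph{every} one-ended non-surface Grushko factor of $G_n$ is a prototype $A_i(n)$; an elementary equivalence between $G_n$'s then forces a chain of embeddings $A_{i_1}(n_1)\hookrightarrow A_{i_2}(n_2)\hookrightarrow\cdots$ among prototypes, co-Hopfianity of one-ended torsion-free hyperbolic groups forbids revisiting an index, and finiteness of the number of Grushko factors yields the contradiction. To repair your proof you would likewise need to pass to finite index simultaneously inside all the one-ended non-surface Grushko factors, not just one.
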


\begin{cor} \label{elfree}
Let $G$ be a finitely generated elementarily free group. If $G$  is not a free product of cyclic groups and surface groups, it has infinitely many finite index subgroups which are not elementarily free.  
\end{cor}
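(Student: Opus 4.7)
The plan is to derive the corollary directly from Theorem \ref{main} together with the solution of Tarski's problem. First I would verify that the hypotheses of Theorem \ref{main} apply: a finitely generated elementarily free group is torsion-free and hyperbolic by the combined results of Sela \cite{Sela_diophantine1, Sela_diophantine6} (and Kharlampovich--Myasnikov) cited in the introduction, and it is a limit group since it has the same universal theory as a non-abelian free group. Hence $G$ is a hyperbolic limit group, and if it is not a free product of cyclic groups and surface groups, Theorem \ref{main} produces an infinite family $\{H_n\}_{n\in \N}$ of normal finite index subgroups of $G$ which are pairwise non elementarily equivalent.

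The second step is the observation that elementary equivalence among elementarily free groups is trivial: by the Kharlampovich--Myasnikov/Sela theorem, any two non-abelian free groups are elementarily equivalent, so any two elementarily free groups are elementarily equivalent to each other. Therefore at most one member of the family $\{H_n\}$ can be elementarily free, for if two of them were, they would be elementarily equivalent, contradicting their choice. Discarding this at most one exception leaves infinitely many $H_n$ which are not elementarily free, proving the corollary.

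There is no genuine obstacle here beyond invoking the two inputs correctly; the only small point to check is that the hypothesis ``limit group'' in Theorem \ref{main} is indeed satisfied by every finitely generated elementarily free group, which is immediate from Remeslennikov's characterisation of $\omega$-residually free groups as $\forall$-free groups recalled in the preliminaries.
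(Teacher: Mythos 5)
Your argument is correct and is essentially the paper's own: the paper simply remarks that the corollary ``is clear'' once one knows that a finitely generated elementarily free group is a hyperbolic limit group (so that Theorem \ref{main} applies), and you spell out exactly that argument, including the final observation that pairwise non-elementarily-equivalent subgroups can contain at most one elementarily free member.
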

On the other hand, if $G$ is a free product of cyclic groups and  fundamental  groups of closed surfaces with $\chi(\Sigma)\le-2$,  it is elementarily free    
by \cite{Sela_diophantine6}, and so are all its non-cyclic finitely generated subgroups.

 The corollary is clear, because a finitely generated elementarily free group is a hyperbolic limit group  by  \cite{Sela_diophantine1} and   \cite{Sela_diophantine6}.
We shall deduce the theorem   
from  a more technical statement, which requires a definition.

\begin{dfn}\label{evq}
We  say that $G$ has \emph{enough  abelian virtual quotients}  if, given any infinite cyclic subgroup $C$, there is a finite index subgroup $G_C$ of $G$  such that $C\cap G_C$ has infinite image in the abelianization of $G_C$; equivalently, $C$ has a finite index subgroup which is a retract of a finite index subgroup of $G$.
\end{dfn}

Note that having enough  virtual abelian quotients is inherited by subgroups. 
 This property is slightly weaker than $G$ being 
    LR over cyclic groups in the sense of \cite{LoRe_subgroup} (they require that $C$ itself be a retract of a finite index subgroup). It implies the existence of   infinitely many finite index subgroups.

\begin{thm} \label{vrai}
 Let $G$ be a one-ended  torsion-free hyperbolic  group. If $G$ has enough  virtual abelian quotients and is not a   surface group, then $G$ has infinitely many characteristic subgroups of finite index which 
  have no non-injective weak preretraction associated to their cyclic  JSJ decomposition.
\end{thm}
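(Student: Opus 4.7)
The plan is to produce, for infinitely many $n$, a characteristic finite index subgroup $G_n\inc G$ whose cyclic JSJ decomposition $\Gamma_{can}(G_n)$ satisfies
\begin{enumerate}[(a)]
\item every surface-type vertex carries the fundamental group of an \emph{orientable} surface;
\item the girth of the underlying graph of $\Gamma_{can}(G_n)$ exceeds $2N_n$, where $N_n=\max_v(g(\Sigma_v)+b(\Sigma_v))$ ranges over surface-type vertices;
\item every edge group of $\Gamma_{can}(G_n)$ has infinite image in $G_n^{\mathrm{ab}}$.
\end{enumerate}
These conditions are intended as a sufficient condition (to be proved alongside) ensuring that $G_n$ admits no non-injective weak preretraction associated to $\Gamma_{can}(G_n)$. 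The heart of that sufficient condition is the following use of the tools of Section~\ref{pinch}: suppose, towards contradiction, that $r$ is such a preretraction; Proposition~\ref{rephr} gives a surface-type vertex with surface $\Sigma$ and a map $p:\pi_1(\Sigma)\to G_n$ which is a conjugation on each boundary subgroup but not an isomorphism onto a conjugate. Apply Lemma~\ref{lemcle} to a maximal family $\calc$ of pinched curves. If $\calc=\es$, Corollary~\ref{mino} forces $g\ge n_1$ and $g+b\ge 2n$ at the vertex, which combined with (b) bounds the number of neighbors and produces a contradiction. If $\calc\ne\es$, orientability (a) provides a signed count of how the boundary of each piece of $\Sigma\setminus\calc$ winds around edges of $\Gamma_{can}(G_n)$; pushing this relation into the abelianization via (c) expresses a short cycle in $\Gamma_{can}(G_n)$ as a product of infinite-order elements summing to zero, which (b) forbids.

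\paragraph{Construction of $G_n$.} Since $G$ is either a limit group or a cubulable hyperbolic group, $G$ is residually finite; thus $K_n:=\bigcap_{[G:K]\le n}K$ is a characteristic subgroup of finite index. The cyclic JSJ of $K_n$ is (up to collapsing edges not present in $\Gamma_{can}(G)$) obtained from $\Gamma_{can}(G)$ by a finite cover, hence its girth tends to infinity with $n$, while the per-vertex surface complexity grows only with the local cover degree and can be shown to be dominated by the girth in the deep characteristic tower; this secures (b) for $n$ large. Intersecting $K_n$ with the kernel of the natural map $K_n\to H_1(K_n,\bbZ/2\bbZ)$ yields a characteristic subgroup in which every non-orientable surface double covers to an orientable one, which yields (a). For (c), the property of having enough virtual abelian quotients is inherited by $K_n$: for each of the finitely many $\Aut(K_n)$-orbits of edge groups $C$ of $\Gamma_{can}(K_n)$, there is a finite index subgroup $H_C\inc K_n$ on which $C\cap H_C$ has infinite image in $H_C^{\mathrm{ab}}$; intersecting the $H_C$'s and then taking the intersection over the finite $\Aut(G)$-orbit makes the outcome characteristic in $G$. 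Combining the three adjustments produces $G_n$ satisfying (a), (b), (c). Because the girth grows with $n$, infinitely many of the resulting $G_n$ are genuinely distinct.

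\paragraph{Main obstacle.} The delicate step is the sufficient-condition argument in the first paragraph: ruling out pinching by combining orientability with the girth and abelianization hypotheses. Here one must systematically convert the topological picture of curves on $\Sigma$ under $p$ into a combinatorial cycle in the graph of $\Gamma_{can}(G_n)$, count crossings with each edge signedly (using (a)), and show that (c) promotes a non-trivial such count into an element of infinite order in $G_n^{\mathrm{ab}}$ along a cycle whose length is controlled by the total complexity of $\Sigma$ (hence $\le 2N_n$), contradicting (b). A secondary technical obstacle is balancing the two growths in the characteristic tower: the girth of the cover and the complexity of the lifted surfaces both grow with $n$, and one must verify that the girth outstrips the complexity for a cofinal sequence.
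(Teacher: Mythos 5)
Your plan follows the same overall architecture as the paper's proof: pass to characteristic finite-index subgroups whose cyclic JSJ has (a) orientable surfaces, (b) large girth relative to surface complexity, and (c) edge groups injecting into the abelianization, and then show these three conditions preclude a non-injective weak preretraction. However, both of the two obstacles you flag are real gaps in your argument, and they are resolved in the paper by ideas you have not identified.

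\textbf{The girth/complexity balance is not secured.} You build $G_n$ by intersecting all subgroups of index at most $n$ and then \emph{assert} that in the resulting covers of $\Gamma_{\mathrm{can}}(G)$ the girth eventually dominates the complexity of the lifted surfaces. There is no reason this should hold: a general finite cover of the graph of groups increases both the girth and the surface complexity, and nothing in the tower $K_n=\bigcap_{[G:K]\le n}K$ controls their ratio. The paper sidesteps this entirely by a different construction. It first produces a single $G_0$ satisfying (a) and (c) and with large enough girth (Lemma~\ref{bongpe}), and then takes the $G_n$ to be preimages of characteristic subgroups of the \emph{topological} fundamental group of the graph $\Gamma_{\mathrm{can}}(G_0)$. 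Covers of that type enlarge the girth while leaving the vertex surfaces completely unchanged, so the inequality ``girth $\gg$ surface complexity'' is inherited automatically. The degenerate case where $\Gamma_{\mathrm{can}}(G_0)$ is a tree (girth $=\infty$) is then handled separately. You do not mention either of these points, and without them the construction does not close. (Incidentally, your opening clause ``Since $G$ is either a limit group or a cubulable hyperbolic group'' misreads the hypothesis of Theorem~\ref{vrai}; that hypothesis belongs to Theorem~\ref{main}. Here you only have ``enough virtual abelian quotients,'' though that still gives plenty of finite-index subgroups.)

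\textbf{The sufficient-condition argument is gestured at rather than given, and what you sketch is not the paper's.} Your split into $\calc=\es$ / $\calc\ne\es$, and the ``signed winding count pushed to the abelianization,'' do not obviously produce a contradiction; in particular it is unclear how $g\ge n_1$ from Corollary~\ref{mino} interacts with the girth bound. The paper's Lemma~\ref{step2} argues differently: after taking a maximal pinched family and a component $S$ containing a boundary curve $C$ of $\Sigma$, orientability plus hypothesis (c) forces $S$ to contain a \emph{second} boundary component $C'$ of $\Sigma$ (otherwise a generator of $\pi_1(C)$ would map to a product of commutators, contradicting injectivity into the abelianization). Then Lemma~\ref{lemcle} places $p(\pi_1(S))$ inside a component of the complement of the open star of the surface vertex, with the images of $C$ and $C'$ attached to two vertices that are at combinatorial distance at least the girth apart in that subgraph; tracing the equivariant map to the Bass--Serre tree forces at least $(N-2)/2$ pairwise non-parallel curves on $S_0$, exceeding the complexity bound for $\Sigma$ when $N$ is large. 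That counting step, not a homological signed count, is what actually uses (b). Your outline would need to be reworked into this form (or some genuine alternative) to constitute a proof.
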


The theorem readily extends to groups which are only virtually torsion-free and are not virtual surface groups.

Having enough  virtual abelian quotients is a technical assumption (which holds for limit groups and cubulable hyperbolic groups, as we shall see). It is not optimal, but note that some assumption is needed: if there is a hyperbolic group which is not residually finite, there is one which has no proper subgroup of finite index \cite{KaWi_equivalence}, and such a group obviously does not satisfy Theorem \ref{vrai}. Also note that groups with property (T) do not have enough  virtual abelian quotients, but they cannot   have non-injective weak preretractions  
because they have no splittings.

We first explain how to deduce Theorem \ref{main}  
from Theorem \ref{vrai}.
  
\begin{lem} \label{abquot}
 Residually free  groups (in particular limit groups) and cubulable hyperbolic groups have enough  abelian virtual quotients.
\end{lem}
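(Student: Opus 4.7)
The plan is: given any infinite cyclic subgroup $C=\langle c\rangle$ of $G$, I want to produce a finite-index subgroup $G_C\le G$ containing some power $c^k$ of $c$, together with a homomorphism $G_C\to \Z$ sending $c^k$ to a nonzero integer (equivalently, to exhibit $\langle c^k\rangle$ as a retract of $G_C$ for some $k\ge 1$). Such a homomorphism forces $c^k$ to have infinite image in $G_C^{\mathrm{ab}}$, which is exactly the condition in Definition \ref{evq} (the definition tolerates passing from $c$ to a power since $C\cap G_C$ is allowed to be any infinite cyclic subgroup of $C$). The two hypotheses supply this data by different routes, but the bookkeeping is identical.

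For a finitely generated residually free group $G$: given nontrivial $c\in G$, residual freeness yields a homomorphism $\phi:G\to \mathbb{F}$ into a free group with $\phi(c)\ne 1$. By Marshall Hall's theorem (every finitely generated subgroup of a free group is a free factor of some finite-index subgroup), the cyclic subgroup $\langle \phi(c)\rangle$ is a free factor of some finite-index $\mathbb{F}'\le \mathbb{F}$, hence there is a retraction $\rho:\mathbb{F}'\twoheadrightarrow \langle \phi(c)\rangle$. Setting $G_C:=\phi^{-1}(\mathbb{F}')$ gives a finite-index subgroup of $G$ containing $c$, and the composition $G_C\xrightarrow{\phi}\mathbb{F}'\xrightarrow{\rho}\langle\phi(c)\rangle\cong\Z$ sends $c$ to a nonzero integer, so $c$ itself has infinite image in $G_C^{\mathrm{ab}}$.

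For a torsion-free cubulable hyperbolic $G$: Agol's virtual specialness theorem provides a finite-index subgroup $G_0\le G$ which is (the fundamental group of) a compact special cube complex. Replacing $c$ by a suitable power, I may assume $c^k\in G_0$. Infinite cyclic subgroups of a hyperbolic group are always quasiconvex, so the Haglund--Wise virtual retract theorem for quasiconvex subgroups of compact special groups yields a finite-index subgroup $H\le G_0$ with $\langle c^k\rangle\le H$ and a retraction $H\twoheadrightarrow\langle c^k\rangle$. Since $H$ still has finite index in $G$ and retracts onto an infinite cyclic subgroup containing $c^k$, the element $c^k$ has infinite image in $H^{\mathrm{ab}}$.

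In both cases the hard work lies entirely in quoting the appropriate external results, so there is no genuine obstacle: on the residually free side one invokes Marshall Hall, and on the cubulable side one invokes the combined Agol/Haglund--Wise virtual specialness and virtual retract machinery. The only point requiring any vigilance is that Definition \ref{evq} permits passing to a power of $c$, which is what lets the cubulable argument swallow the index of $G_0$ in $G$ painlessly. (As an aside, the property of having enough abelian virtual quotients is plainly preserved under finite direct products and under passage to subgroups, so if one prefers one could reduce the residually free case to the limit group case via the Bridson--Howie--Miller--Short embedding of finitely generated residually free groups into finite products of limit groups; but the direct Marshall Hall argument above is shorter.)
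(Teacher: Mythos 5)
Your proof is correct and follows essentially the same two-pronged route as the paper: Marshall Hall's theorem in the residually free case, and Agol's virtual specialness together with the Haglund--Wise virtual-retract machinery for quasiconvex (here infinite cyclic) subgroups in the cubulable case. The only cosmetic difference is that the paper unpacks the Haglund--Wise steps (clean complexes, canonical completion and retraction) explicitly, whereas you quote the virtual retract theorem as a black box.
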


\begin{proof}
If $G$ is residually free, there is a map $p$ from $G$ to a free group which is injective on $C$. By Hall's theorem, 
the image of $C$ under $p$ is a free factor of a finite index subgroup, and we let $G_C$ be its preimage.

If $G$ is hyperbolic and cubulable, then by \cite{Agol_virtual} it is virtually special: 
 there is some finite index subgroup $G_1<G$ acting freely and cocompactly on a CAT(0) cube complex $X$
such that $X/G_1$ is special.
Since quasiconvex subgroups are separable by \cite[Corollary  7.4]{HaWi_special},
up to passing to a further finite index subgroup, we can assume that the  $2$-neighbourhoods of hyperplanes are embedded in $X/G_1$
so that $X/G_1$ is fully clean. Denote   $C_1=C\cap G_1$.

By \cite[Proposition 7.2]{HaWi_special}, there is a convex cube complex $Y\subset X$ which is $C_1$-invariant and $C_1$-cocompact   and  defines a local isometry $f:Y/C_1\ra X/G_1$   which is fully special by \cite[Lemma 6.3]{HaWi_special}. 
By \cite[Proposition 6.5]{HaWi_special}, there is a finite cover $X/G_2$ of $X/G_1$ and an embedding $Y/C_1\ra X/G_2$ with a retraction $X/G_2\ra Y/C_1$. 
Group-theoretically, we get a retraction from the finite index subgroup $G_2$ of $G$ to $C_1$. This proves the lemma.
\end{proof}

  Note that any residually free hyperbolic group is a limit group \cite{Baumslag_residually}.

\begin{proof} [Proof of Theorem \ref{main}]

Given $G$ as in the theorem,
we write its Grushko decomposition as  $G=A_1*\cdots*A_p*F*S_1*\dots*S_q$ 
where $F$ is free, each $S_i$ is a  closed surface group, and each $A_i$ is a one-ended group which is not a surface group 
(the  $A_i$'s are not necessarily distinct up to   isomorphism). By assumption, we have $p\ge1$.

By Theorem \ref{vrai} and Lemma \ref{abquot}, we can find in  each $A_i$  a decreasing sequence  $A_i(1)\supsetneqq A_i(2)\supsetneqq A_i(3)\supsetneqq\dots$ of characteristic  finite index subgroups which   have no non-injective preretraction associated to their cyclic  JSJ decomposition. Let $G_n$ be the kernel of the natural projection from $G$ onto $\prod_{i=1}^p A_i/A_i(n)$. It is a finite index subgroup of $G$ whose non-surface Grushko factors are all isomorphic to some $A_i(n)$.

We claim that there are infinitely many distinct $G_n$'s up to elementary equivalence. If not, we may assume that they are all equivalent. Applying Proposition \ref{plgr} to $G_1$ and $G_2$, we find that $A_1(1)$ embeds into $G_2$, hence into some Grushko factor  $A_j(2)$. We cannot have $j=1$ because $A_1(1)$ properly contains $A_1(2)$  and torsion-free one-ended hyperbolic groups are co-Hopfian \cite{Sela_structure}. We may therefore assume $j=2$. We then find that $A_2(2)$ embeds in some $A_3(k)$, and $k>2$ by co-Hopfianity.
 Iterating this argument leads to a contradiction.
\end{proof}
 
The remainder of this section is devoted to the proof of Theorem \ref{vrai}. 

We consider the canonical cyclic JSJ decomposition $\Gamma_{can}(G)$, or simply  $\Gamma_{can}$,   and the associated Bass-Serre tree $T_{can}$.   
Its non-rigid  vertices are of surface-type.   

Recall that the girth of   a   graph  is the smallest length of an embedded circle ($\infty$ if the graph is a tree). 
We say that $\Gamma_{can}$ has \emph{large girth} if its girth $N$ is large  compared to   the complexity  of the surfaces which appear in $\Gamma_{can}$; precisely, $(N-2)/2$ should be larger than the maximal cardinality of a family of non-parallel disjoint simple closed curves on a given surface (the curves may be boundary parallel, but should not be null-homotopic).

If $G_0$ is a finite index subgroup, its JSJ tree is obtained by restricting  the action of $G$ on  
$T_{can}$ to $G_0$. This is because by \cite{Bo_cut} one may construct  $T_{can}$  purely from the topology of the boundary of $G$, and $\partial G_0=\partial G$.   If $v$ is a surface-type vertex for the action of $G$ on $T_{can}$, it is one also for the action of $G_0$, but in general the surface is replaced by a finite cover.

The proof requires two lemmas.

\begin{lem} \label{bongpe}
There is 
  a characteristic  subgroup of finite index $G_0\inc G$ whose JSJ decomposition has   the following properties: 
\begin{enumerate}
\item
the surfaces which appear in $\Gamma_{can}(G_0)$ are all orientable;
\item each edge group maps injectively to the abelianization of $G_0$;
\item the graph $\Gamma_{can}(G_0)$ has large girth (as defined above).
\end{enumerate}
\end{lem}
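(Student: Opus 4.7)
The plan is to build $G_0$ in two stages: first produce a characteristic finite-index subgroup $G_1 \le G$ satisfying (1) and (2), then pass to a further characteristic finite-index subgroup $G_0 \le G_1$ that achieves (3) while preserving (1) and (2) automatically.

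For (1), I would observe that for each non-orientable surface-type vertex $v$ of $\Gcan$, the orientation character $\omega_v\colon \pi_1(\Sigma_v) \to \Z/2\Z$ vanishes on every boundary subgroup (boundary curves in any surface are two-sided), hence on every incident edge group of $v$; a routine cocycle computation in the graph of groups then shows $\omega_v$ extends to a homomorphism $\wh\omega_v\colon G \to \Z/2\Z$ (trivial on other vertex groups and on all stable letters). Intersecting the kernels of the $\wh\omega_v$'s yields a finite-index subgroup in which every non-orientable $\Sigma_v$ is replaced by its orientation double cover and orientable surfaces are unchanged, so all are orientable. For (2), choose generators $c_1,\dots,c_k$ of the edge groups representing the finitely many $G$-orbits of edges of $\Tcan$; the ``enough virtual abelian quotients'' hypothesis provides, for each $i$, a finite-index $H_i \le G$ containing a power $c_i^{n_i}$ with infinite image in $H_1(H_i)$, and in $\bigcap_i H_i$ every edge-group generator still has infinite image in the abelianization. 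Let $G_1$ be the intersection over the (finite) $\Aut(G)$-orbit of the intersection of these two subgroups; both properties are inherited by subgroups (finite covers of orientable surfaces are orientable; and if $c \in G_1$ has infinite order in $H_1(G_1)$ then $c$ still has infinite order in $H_1(H)$ for $H \le G_1$ containing $c$, via $H_1(H) \to H_1(G_1)$), so $G_1$ is characteristic and satisfies (1) and (2).

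For (3), consider the canonical surjection $\pi\colon G_1 \twoheadrightarrow F := \pi_1(\Gcan(G_1))$ obtained by killing all vertex groups; its kernel $N$ is $\Aut(G_1)$-invariant since $\Gcan(G_1)$ is canonical, giving an induced action of $\Aut(G_1)$ on the free group $F$. Because $F$ is residually finite, for any prescribed girth $N_0$ there is a finite-index normal $K_0 \triangleleft F$ whose non-trivial elements all have cyclically reduced length $\ge N_0$; intersecting the finite $\Aut(G_1)$-orbit of $K_0$ produces such a $K$ that is moreover $\Aut(G_1)$-invariant. Set $G_0 := \pi^{-1}(K)$. The key observation is that $G_0 \supseteq N$, and $N$ contains every vertex and edge group of $\Gcan(G_1)$; hence $G_0 \cap G_v = G_1 \cap G_v$ and $G_0 \cap G_e = G_1 \cap G_e$ for every vertex $v$ and edge $e$ of $\Tcan$, and the induced map of underlying graphs $\Gcan(G_0) \to \Gcan(G_1)$ is a genuine regular graph covering with deck group $F/K$, corresponding to the subgroup $K \le F$ on $\pi_1$. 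Its girth equals the minimum translation length on the universal cover (a tree) of a non-trivial element of $K$, hence is $\ge N_0$.

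Inheritance of (1) and (2) is now immediate: the surface-type vertex groups and the cyclic edge groups of $\Gcan(G_0)$ coincide with those of $\Gcan(G_1)$, and an element of infinite order in $H_1(G_1)$ lying in $G_0$ also has infinite order in $H_1(G_0)$ via the inclusion-induced map $H_1(G_0) \to H_1(G_1)$. The subgroup $G_0$ is characteristic in $G$ because $G_1$ is characteristic and $K$ was chosen $\Aut(G_1)$-invariant. The point where the argument is most delicate, and which I expect to be the main obstacle to formalize carefully, is precisely the observation that $G_0 = \pi^{-1}(K)$ automatically contains $N$ and therefore preserves all vertex and edge groups of $\Gcan(G_1)$; without this one could not interpret $\Gcan(G_0) \to \Gcan(G_1)$ as an ordinary graph covering and invoke the classical ``residual finiteness of free groups yields covers of large girth'' fact. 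One cannot swap the order of the two stages either, because the girth of $\Gcan(H)$ is not monotone under passing to finite-index subgroups: shrinking a vertex group can break the covering structure and introduce new short loops coming from the new graph rank, so the large-girth step must be performed last.
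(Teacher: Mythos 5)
Your proof is correct and follows essentially the same route as the paper's: a degree-two cover to achieve (1) (you phrase it via extending the orientation characters $\omega_v$ to $G$, the paper builds the corresponding graph-of-spaces cover directly), intersecting the $G_{C_i}$'s provided by the enough-virtual-abelian-quotients hypothesis for (2), and pulling back a high-girth finite cover of $\Gamma_{can}$ along $G\onto\pi_1(\Gamma_{can})$ for (3), with characteristicity arranged by intersecting over the relevant $\Aut$-orbit in each case. The two details you take care to spell out -- that the girth step must come last because $\Gamma_{can}(H')\to\Gamma_{can}(H)$ is in general \emph{not} a graph covering for a finite-index $H'\le H$, and that $G_0=\pi^{-1}(K)\supseteq N=\ker\pi$ contains every vertex and edge stabilizer so that $\Gamma_{can}(G_0)\to\Gamma_{can}(G_1)$ really is the regular cover associated to $K$ -- are correct and worth making explicit; the paper asserts the second point only in the body of the proof of Theorem~\ref{vrai} rather than inside the lemma's proof, and leaves the first implicit in the ordering of its steps.
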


Note that the first two properties are inherited by finite index subgroups. 

\begin{lem} \label{step2}
Let $G$ be a torsion-free one-ended hyperbolic group whose JSJ decomposition   satisfies the   properties of  the previous lemma.    
If $G$  is not a surface group, then $G$     has no non-injective weak preretraction associated to its cyclic  JSJ decomposition.
\end{lem}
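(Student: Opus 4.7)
\emph{Paragraph 1 (Setup and $b\geq 2$).} I would proceed by contradiction. By Proposition~\ref{rephr}, a non-injective weak preretraction yields a surface-type vertex $v$ of $\Gamma_{can}$ with $G_v=\pi_1(\Sigma)$ ($\Sigma$ orientable by hypothesis) and a homomorphism $p:\pi_1(\Sigma)\to G$ which is a conjugation on each boundary subgroup $\langle h_j\rangle$ but is not an isomorphism onto a $G$-conjugate of $\pi_1(\Sigma)$. Since $\Sigma$ is orientable, abelianizing the boundary relation $h_1\cdots h_b=\prod_k[a_k,b_k]$ gives $\sum_j\ol{h_j}=0$ in $G^{ab}$. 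The hypothesis that each edge group of $\Gamma_{can}$ injects into $G^{ab}$ forces each $\ol{h_j}\ne 0$, ruling out $b=1$; the case $b=0$ would make $G=\pi_1(\Sigma)$ a surface group, which is excluded. Hence $b\ge 2$.

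\emph{Paragraph 2 (Each piece contains at most one boundary of $\Sigma$).} After collapsing all edges of $\Gamma_{can}$ not containing $v$, apply Lemma~\ref{lemcle} to a maximal pinched family $\calc\inc\Sigma$, producing pieces $S_1,\ldots,S_m$ of $\Sigma\setminus\calc$; for each $k$, $p(\pi_1(S_k))$ fixes a vertex $u_k\in T_{can}$ outside the $G$-orbit of $v$. For each boundary $B_j\inc S_k$, $p(h_j)$ is a $G$-conjugate of $h_j$, and by acylindricity its fixed set is a translate of either the edge $e_j=vw_j$ or of a length-$2$ segment centered at $w_j$; in either case the projection of $u_k$ to $\Gamma_{can}$ is either $w_j$ or a vertex at graph-distance $2$ from $v$ through $w_j$. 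If two distinct boundaries $B_j,B_{j'}$ lay in the same piece, equating the possible projections of $u_k$ forces a cycle in $\Gamma_{can}$ of length at most $4$, which is ruled out by the large-girth hypothesis. So each piece contains at most one $B_j$.

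\emph{Paragraph 3 (Dual graph and girth contradiction).} Build the extended dual graph $\tilde D$ on vertex set $\{\hat v\}\cup\{S_1,\dots,S_m\}$, with boundary half-edges $B_j$ joining $\hat v$ to the unique piece containing $B_j$ and interior edges $c\in\calc$ joining the two pieces they separate. An equivariant general-position map $\tilde\Sigma\to T_{can}$ gives a simplicial map $\tilde D\to\Gamma_{can}$ sending $\hat v\mapsto v$, $B_j\mapsto e_j$, each piece to a non-$v$ vertex, and each $c\in\calc$ to an edge of $\Gamma_{can}$ joining two non-$v$ vertices. Picking $j\ne j'$ (possible since $b\ge 2$), use connectedness of the dual graph $D$ of pieces to take a shortest path in $D$ between the pieces containing $B_j$ and $B_{j'}$, of length $l\le|\calc|$; closing it through $\hat v$ produces a cycle $\gamma\inc\tilde D$ of length $2+l$. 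Its image in $\Gamma_{can}$ is $e_j\cdot w_m\cdot e_{j'}^{-1}$, where $w_m$ is a walk from $w_j$ to $w_{j'}$ using only non-$v$-incident edges. Reducing $w_m$ cannot yield the empty walk, since $w_j\ne w_{j'}$ by simplicity of $\Gamma_{can}$, and no cancellation can occur at either join since $e_j,e_{j'}$ are $v$-incident whereas $w_m$ is not. The image therefore reduces to a non-trivial closed reduced walk in $\Gamma_{can}$ of length at most $2+l$, hence at least the girth $N$, giving $|\calc|\ge l\ge N-2$. But $|\calc|\le M$, where $M$ is the maximal cardinality of a family of non-parallel essential disjoint simple closed curves on $\Sigma$, so $M\ge N-2$, directly contradicting the large-girth hypothesis $(N-2)/2>M$.

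\emph{Main obstacle.} The delicate step is the reduction argument in Paragraph 3: one must ensure that the chosen cycle in $\tilde D$ maps to a non-trivial reduced closed walk in $\Gamma_{can}$ so that the girth bound may be invoked. This rests on the clean dichotomy between $v$-incident and non-$v$-incident edges in the image (which prevents cancellation at the joins) and on the simplicity of $\Gamma_{can}$ coming from large girth (which keeps $w_j\ne w_{j'}$ and hence the middle walk from collapsing).
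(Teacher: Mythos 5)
Your Paragraph~2 contains a misreading of Lemma~\ref{lemcle} that undermines the rest of the argument. You apply Lemma~\ref{lemcle} after implicitly collapsing the edges of $\Gamma_{can}$ not incident to $v$, but you then assert that $p(\pi_1(S_k))$ fixes a vertex $u_k$ of $T_{can}$. What Lemma~\ref{lemcle} actually gives is a fixed vertex of the Bass-Serre tree $T$ of the \emph{collapsed} graph of groups: such a vertex not in the orbit of $v$ has stabilizer a conjugate of $R_i=\pi_1(\Gamma_i)$, where $\Gamma_i$ is a whole subgraph of groups of $\Gamma_{can}$. In other words, $p(\pi_1(S_k))$ is only known to be contained in a conjugate of $R_i$; it need not be elliptic in $T_{can}$ at all (think of a piece with non-trivial genus and no constraints on the images of the handle generators). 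Once $u_k$ no longer lives in $T_{can}$, the sentence ``the projection of $u_k$ to $\Gamma_{can}$ is either $w_j$ or a vertex at graph-distance 2'' loses its meaning, and the short-cycle deduction in Paragraph~2 collapses. Precisely this difficulty is why the paper performs a \emph{second} cutting: it caps off $S$ to form $S_0$, represents the induced map $\pi_1(S_0)\to R_i$ by an equivariant general-position map to the Bass-Serre tree of $\Gamma_i$, and extracts a new family $\calc_0$ of curves whose count is bounded above by the complexity of $\Sigma$ but forced below by $(N-2)/2$ because a path from $w$ to $w'$ in $\Gamma_i$ must traverse many stars of cyclic vertices.

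Paragraph~3 has a second, independent gap. You define the extended dual graph $\tilde D$ using the \emph{pinched} family $\calc$ as interior edges, and then claim that an equivariant general-position map $\tilde\Sigma\to T_{can}$ induces a simplicial map $\tilde D\to\Gamma_{can}$ sending ``each $c\in\calc$ to an edge of $\Gamma_{can}$.'' But each $c\in\calc$ satisfies $p(\pi_1(c))=1$; the curve carries no combinatorial data in $T_{can}$, and the two pieces adjacent to $c$ may well map into vertex groups that are arbitrarily far apart in $T_{can}$. A general-position map with each piece sent to a single vertex would send $c$ to a long \emph{path}, not an edge, creating a parallel fan of preimage curves rather than a single dual edge. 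So $\tilde D\to\Gamma_{can}$ is not a simplicial map, the cycle $e_j\cdot w_m\cdot e_{j'}^{-1}$ is not realized in $\Gamma_{can}$, and the inequality $|\calc|\geq N-2$ is unsupported. (You are also conflating the fixed family $\calc$ of pinched curves with the family of preimages of edge midpoints, which are different objects serving different purposes in Lemma~\ref{lemcle}.) Finally, even if Paragraph~2 had been correct, you would not have needed Paragraph~3: a piece with exactly one boundary component of the orientable $\Sigma$ and all other boundaries pinched gives $\bar h_j=0$ in $G^{\mathrm{ab}}$ directly — this is the abelianization argument the paper uses, applied to a piece rather than to $\Sigma$ as you do in Paragraph~1 — contradicting condition~(2) of Lemma~\ref{bongpe}.
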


Before proving these lemmas, let us   explain how they imply  Theorem \ref{vrai}. 

First suppose that there is $G_0$ as in Lemma \ref{bongpe} 
 such that $\Gamma_{can}(G_0)$ is not a tree. Then choose a decreasing sequence of characteristic finite index subgroups $  H_1\supset H_2\supset\cdots$ of the topological fundamental group of the graph $\Gamma_{can}(G_0)$, and define $G_n$ as the preimage of $H_n$ under the natural  epimorphism  from $G_0$ to the fundamental group  of  the graph $\Gamma_{can}(G_0)$. 

Since $\Gamma_{can}(G_0)$ is invariant under automorphisms of $G_0$, each $G_n$ is characteristic in $G$. Its JSJ decomposition is the covering of $\Gamma_{can}(G_0)$ associated to $H_n$, with the lifted graph of groups structure.
The group $G_n$ satisfies the first two properties of Lemma \ref{bongpe}, and also the third one because the surfaces appearing in $\Gamma_{can}(G_n)$ are the same as in $\Gamma_{can}(G_0)$, so Lemma \ref{step2} applies.

  If $\Gamma_{can}(G_0)$ is   a tree for every subgroup of finite index $G_0<G$ as in Lemma \ref{bongpe},  we fix any such $G_0$  
 and we let $G_n$ be any sequence of  distinct characteristic subgroups of finite index (it exists because there are enough abelian  virtual quotients). We cannot control the complexity of surfaces, but Lemma \ref{step2} applies to $G_n$ because the girth is   infinite.

We now prove the lemmas.

\begin{proof}[Proof of Lemma \ref{bongpe}]
Since the first two properties are inherited by finite index subgroups, it suffices to construct $G_0$ having one given property.

 To achieve (1), view the JSJ decomposition  $\Gamma_{can}$ as expressing $G$ as the fundamental group of a graph of spaces  $X$, with a surface $\Sigma$ for each surface-type vertex. If there is a non-orientable surface, consider  a 2-sheeted covering of $X$ which is trivial (a product) above the complement of the non-orientable surfaces, and is the orientation covering $\hat \Sigma$ over each non-orientable surface (note that  a boundary component of $\Sigma$ lifts to two curves in $\hat \Sigma$). Though $\Gamma_{can}$ is canonical, the 2-sheeted covering is not, so we
  define $G_0$ as the intersection of all   subgroups of $G$ of index 2.

 For (2), let $C_i$ be the edge groups of $\Gamma_{can}$, and let $G_0$ be any characteristic subgroup of finite index contained in every group $G_{C_i}$ provided by   Definition \ref{evq}.
  Each edge group of $\Gamma_{can}(G_0)$ is the image of some $C_i\cap G_0$ by an automorphism of $G_0$, so maps injectively to the abelianization.

If the girth of $\Gamma_{can}$ is too small, we choose a characteristic finite index subgroup $H_0$ of the topological fundamental group $\pi_1(\Gamma_{can})$ such that the associated finite cover of $\Gamma_{can}$ has large girth, and  we define $G_0$ as the preimage of $H_0$ under the    epimorphism  from $G$ to  the topological fundamental group of $ \Gamma_{can}$. As in the proof of the theorem given above, invariance of $\Gamma_{can}$ under automorphisms implies that $G_0$ is characteristic, and the complexity of surfaces does not change.
\end{proof}

\begin{preuve}[Proof of Lemma \ref{step2}]

We consider the canonical cyclic JSJ decomposition $\Gamma=\Gamma_{can}$. Recall that it is bipartite: each edge joins a vertex carrying an infinite  cyclic group to a vertex carrying a non-cyclic group.

We argue by way of contradiction, assuming that there is a non-injective weak preretraction. By Proposition \ref{rephr}, there is a surface-type vertex group  $G_v=\pi_1(\Sigma)$ and a map $p:\pi_1(\Sigma)\to G$ which is a conjugation on each boundary subgroup and is not an isomorphism of $\pi_1(\Sigma)$ onto a conjugate. 

Let  $\calc$ be a maximal family of pinched curves on $\Sigma$ (see Definition \ref{pin}), and let $S$ be a component of the compact surface obtained by cutting $\Sigma$ along $\calc$. Since $G$ is not a surface group, we may assume that $S$ contains a boundary component $C$ of $\Sigma$.
   It must contain another boundary component $C'$ of $\Sigma$: otherwise, since $\Sigma$ is orientable and curves in $\calc$ are pinched, the image of a generator of $\pi_1(C)$ would be a product of commutators, contradicting the second item in Lemma \ref {bongpe}.
  
     We apply Lemma \ref{lemcle} to the graph of groups obtained from $\Gamma$ by collapsing all edges   which do not contain $v$. 
   We find that the image of $\pi_1(S)$ by $p$ is contained (up to conjugacy) in the fundamental group $R_i$ of a subgraph of groups $\Gamma_i$ of $\Gamma$, which is a component of the complement of the open star of $v$. The image of the fundamental groups of $C$ and $C'$ are contained in the groups carried by distinct vertices $w$ and $w'$ of $\Gamma_i$ which are adjacent to $v$ in $\Gamma$, but far away from each other in $\Gamma_i$ because the girth of $\Gamma$ is large.

As in the proof of Lemma \ref{lemcle},
  we construct a surface $S_0$ by attaching   discs to boundary curves of $S$ coming from $\calc$, and we 
represent   the induced map $\pi:\pi_1(S_0)\to R_i$ by an equivariant map   from the universal covering $\tilde S_0$ to the Bass-Serre tree of $\Gamma_i$. We  consider preimages of midpoints of edges, and we project to $S_0$.
 We obtain a finite family $\calc_0$ of disjoint simple closed curves on $S_0$.
   
 View $\Gamma_i$ as covered by stars of vertices $u$ carrying a cyclic group. If $C_0$ is a curve in $\calc_0$, the image of $\pi_1(C_0)$ by $\pi$ is contained (up to conjugacy) in a unique $G_u$, and   curves   associated to different vertices cannot be parallel. Since any path joining $w$ to $w'$ in $\Gamma_i$ must go through at least $(N-2)/2$ stars, with $N$ the girth of $\Gamma$, the family $\calc_0$ must contain at least $(N-2)/2$ non-parallel curves. We get a contradiction if $N$ is large enough.
 \end{preuve}
 
%\bibliography{published,unpublished}

 \bigskip

\begin{flushleft}
 Vincent Guirardel\\
Institut de Recherche Math\'ematique de Rennes\\
Universit\'e de Rennes 1 et CNRS (UMR 6625)\\
263 avenue du G\'en\'eral Leclerc, CS 74205\\
F-35042  RENNES C\'edex\\
\emph{e-mail: }\texttt{vincent.guirardel@univ-rennes1.fr}\\[8mm]

Gilbert Levitt\\
Laboratoire de Math\'ematiques Nicolas Oresme\\
Universit\'e de Caen et CNRS (UMR 6139)\\
(Pour Shanghai : Normandie Univ, UNICAEN, CNRS, LMNO, 14000 Caen, France)\\
 \emph{e-mail: }\texttt{levitt@unicaen.fr}\\[8mm]

Rizos Sklinos\\ 
Institut Camille Jordan\\
Universit\'{e} Claude Bernard Lyon 1 (UMR 5208)\\ 
43 boulevard du 11 novembre 1918 \\
69622, Villeurbanne cedex, France\\ 
\emph{e-mail: }\texttt{rizozs@gmail.com}\\[8mm]

\end{flushleft}

\end{document}